\newtheorem{theorem}{Theorem}[section]
\newtheorem{proposition}[theorem]{Proposition}
\newtheorem{lemma}[theorem]{Lemma}
\def\R{\mathbb{R}}
\def\N{\mathbb{N}}
\def\e{\epsilon}
\def\g{\gamma}
\def\d{\delta}
\def\G{\Gamma}
\def\a{\alpha}
\newcommand{\whocare}[1]{}
\DeclareMathOperator{\diam}{diam}
\DeclareMathOperator{\w}{{\bf w}}
\DeclareMathOperator{\sdim}{s-dim}
\DeclareMathOperator{\dist}{dist}
\DeclareMathOperator{\Lip}{Lip}
\numberwithin{equation}{section}
\title{Parametrizability of infinitely generated attractors}
\author{Eve Shaw}
\author{Vyron Vellis}
\thanks{V.V. was partially supported by NSF DMS grants 1952510 and 2154918.}
\date{\today}
\subjclass[2010]{Primary 28A80; Secondary 26A16, 28A75, 53A04}
\keywords{H\"older curves, parameterization, infinite iterated function systems}
\address{Department of Mathematics\\ The University of Tennessee\\ Knoxville, TN 37966}
\email{mshaw20@vols.utk.edu}
\address{Department of Mathematics\\ The University of Tennessee\\ Knoxville, TN 37966}
\email{vvellis@utk.edu}
\begin{document}

\maketitle

\begin{abstract}An infinite iterated function system (IIFS) is a countable collection of contraction maps on a compact metric space. In this paper we study the conditions under which the attractor of such a system admits a parameterization by a continuous or H\"older continuous map of the unit interval.
\end{abstract}

\section{Introduction}

Iterated function systems are among the most standard and canonical methods in mathematics of producing fractal sets. An \emph{iterated function system} (abbv. IFS) is a finite collection $\mathcal{F}$ of contraction maps on a complete metric space $X$. Hutchinson \cite{Hutchinson} showed that for each IFS $\mathcal{F}$, there exists a unique nonempty compact set $K\subset X$ (called the  \emph{attractor} of $\mathcal{F}$) such that $K = \bigcup_{\phi\in\mathcal{F}}\phi(K)$. The \emph{similarity dimension} of an IFS $\mathcal{F}$ is the unique solution to the equation
\begin{equation}\label{eq:sdim}
 \psi_{\mathcal{F}}(t) := \sum_{\phi \in \mathcal{F}}\Lip(\phi)^t = 1
 \end{equation}
where $\Lip(\phi)$ denotes the infimum of all $L>0$ for which $\phi$ is $L$-Lipschitz. Here and for the rest of the paper we only consider nondegenerate proper contractions, that is, we always assume that $\Lip(\phi) \in (0,1)$. 

The connection between the similarity dimension of an IFS $\mathcal{F}$ and the Hausdorff dimension of its attractor $K$ was established by Hutchinson \cite{Hutchinson} who showed that if $\mathcal{F}$ is an IFS of similarities on $\R^n$ satisfying the open set condition, then $\dim_H(K) = \sdim(\mathcal{F})$. Recall that a similarity in $\R^n$ is the composition of a dilation and an isometry. An IFS  $\mathcal{F}$ on $\R^n$ satisfies the \emph{open set condition} (abbv. OSC) if there exists a nonempty open set $U \subset \R^n$ such that $\phi(U) \subset U$ for all $\phi \in \mathcal{F}$, and $\phi(U)\cap \phi'(U) = \emptyset$ for all distinct $\phi,\phi' \in \mathcal{F}$. Many well known fractals (such as the standard Cantor set, the Sierpi\'nski carpet, the von Koch snowflake, etc.) are attractors of IFS of similarities on the plane with the OSC. See also \cite{Sch94, Sch96,FF,FHOR} for the necessity of the OSC. 

A natural question in the theory of Dynamical Systems is the regularity of an IFS attractor and whether it admits ``good'' parameterizations by the unit interval. Hata \cite{Hata} showed that if the attractor $K$ of an IFS is connected, then it is the image of a curve, that is, the image of $[0,1]$ under a continuous map. The second named author and Badger \cite{BV} improved Hata's result by proving that if the attractor $K$ of an IFS $\mathcal{F}$ is connected, then it is the image of $[0,1]$ under a $\frac1{\a}$-H\"older continuous map for any $\a > \sdim(\mathcal{F})$. Under the extra assumptions that $X=\R^n$ and that $\mathcal{F}$ is an IFS of similarities on $\R^n$ satisfying the OSC, Remes \cite{Remes} showed earlier that the attractor is the image of $[0,1]$ under a $\frac1{\a}$-H\"older continuous map where one can actually have $\a = \sdim(\mathcal{F})$. Remes' result is sharp in that there exists no $\frac1{\a}$-H\"older parameterization if $\a<\sdim(\mathcal{F})$. The assumption $X=\R^n$ in Remes' theorem can be replaced by the assumption $\mathcal{H}^{\sdim(\mathcal{F})}(K) >0$ \cite{BV}. Here and for the rest of the paper $\mathcal{H}^\a$ denotes the Hausdorff $\a$-dimensional measure.

In their celebrated paper, Mauldin and Urba\'nski \cite{MU} (see also \cite{M}) further extended Hutchinson's theory and introduced the notion of an \emph{infinite iterated function system} (abbv. IIFS); i.e., an infinite countable collection of contractions on a compact metric space $X$. Ever since their introduction, IIFSs have played a major role in fractal geometry, geometric group theory, and number theory; see \cite{MU99,HU02,MU02,UZ02,KZ06,MSU09,JR12,SW15,RGU16,BF23} and the references therein.

Here, unlike in most literature, we do not assume that contractions are conformal or even injective. Given an IIFS $\mathcal{F}=\{\phi_i: i\in\N\}$ on a compact metric space $X$, we define the attractor of $\mathcal{F}$ by
\[ K = \bigcup_{(i_n) \subset \N} \bigcap_{n=1}^{\infty} \phi_{i_1}\circ \cdots\circ \phi_{i_n} (X).\]
The attractor $K$ in the infinite setting may not be compact. Moreover, although $K = \bigcup_{\phi\in\mathcal{F}}\phi(K)$, there may exist multiple nonempty subsets of $X$ with this property. If, additionally, each $\phi_i\in \mathcal{F}$ is injective, and if each $x\in X$ is contained in at most finitely many $\phi_i(X)$, then
\begin{equation}\label{eq:IIFSattractor} 
K = \bigcap_{n\in\N} \bigcup_{i_1,\dots,i_n \in \N} \phi_{i_1}\circ \cdots\circ \phi_{i_n} (X).
\end{equation}

In the infinite setting, the auxiliary function $\psi_{\mathcal{F}}$ in \eqref{eq:sdim} is either infinite for all $t>0$, or it is continuous and strictly decreasing on an interval $(a,\infty)$ for some $a\geq 0$. Hence, unlike in the finite case, equation \eqref{eq:sdim} may not have a solution. We define the similarity dimension of an IIFS $\mathcal{F}$ as
\[ \sdim(\mathcal{F}) := \inf \{t \geq 0 : \psi_{\mathcal{F}}(t) \leq 1\} .\]
By Fatou's Lemma, the infimum above is in fact a minimum. As in the finite case, if $\mathcal{F}$ is an IIFS of similarities on $X=\overline{U}\subset\R^n$ where $U$ is a bounded domain, satisfying the OSC, then the Hausdorff dimension of the attractor $K$ is equal to $\sdim(\mathcal{F})$ \cite[Corollary 3.17]{MU}.

The purpose of this paper is to study the parameterizability of IIFS attractors. In the infinite case an interesting dichotomy appears. On the one hand, in Proposition \ref{prop:s=1} we show that if the attractor $K$ of an IIFS $\mathcal{F}$ is a continuum and if $\sdim(\mathcal{F}) = 1$, then $K$ is a line segment; see also \cite{MVU01} for a similar phenomenon. On the other hand, both Hata's theorem and the Badger-Vellis theorem are false when the similarity dimension is greater than 1, even if it is arbitrarily close to 1.

\begin{theorem}\label{thm:1}
\begin{enumerate}
\item For each $\e>0$, there exists an IIFS of similarities on the unit square $[0,1]^2$ having the OSC such that $\sdim(\mathcal{F}) < 1+\e$, and its attractor is a continuum but not path connected.
\item For each $\e>0$, there exists an IIFS of similarities on the unit square $[0,1]^2$ having the OSC such that $\sdim(\mathcal{F}) < 1+\e$, and its attractor is the image of a curve, but not the image of a H\"older curve.
\end{enumerate}
\end{theorem}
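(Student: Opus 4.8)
The plan is to establish both statements by explicit construction. In each case I will produce a countable family $\mathcal F=\{\phi_i\}$ of contracting similarities of the square $[0,1]^2$, satisfying the OSC, whose attractor carries a prescribed coarse topology built on a sequence of teeth (or oscillations) that accumulate onto a limit fiber as $x\to 0$. The governing dichotomy is whether the heights of these teeth stay bounded below (part (1)) or tend to $0$ (part (2)). The essential constraint throughout is that an IIFS attractor satisfies $K=\bigcup_i\phi_i(K)$, so each piece $\phi_i(K)$ is a similar copy of the entire attractor; the teeth therefore cannot be literal segments but must be assembled hierarchically from rescaled copies of $K$. The contraction ratios will be chosen so that the similarity-dimension budget $\psi_{\mathcal F}(1+\e)\le 1$ is met.

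For part (1) I will realize a self-similar version of the topologist's sine curve. Arranging the images in the vertical strips $\{2^{-n}\le x\le 2^{-n+1}\}$, I will make the union of pieces a single arc that, in the $n$th strip, oscillates with a fixed amplitude $c_0>0$, and whose closure adds the limit segment $\{0\}\times[0,c_0]$. Connectedness of $K$ (so that it is a continuum) follows by checking, as in Hata's criterion, that the pieces $\phi_i(K)$ form a chain of consecutively touching sets together with the accumulation fiber, and the OSC follows by taking the interiors of the defining strips as the open set. Failure of path connectivity is the classical topologist's-sine-curve obstruction: because the oscillation amplitude is bounded below by the fixed constant $c_0$, no continuous path can reach the limit segment from the arc. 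Crucially, $c_0$ may be taken arbitrarily small, so the construction is compatible with small similarity dimension: filling each strip with $\asymp c_0/s_n$ copies of size $s_n=2^{-n}$ contributes $\asymp c_0\,s_n^{t-1}$ to $\psi_{\mathcal F}(t)$, whence $\psi_{\mathcal F}(1+\e)\asymp c_0\sum_n 2^{-n\e}$, which is $\le 1$ once $c_0$ is small. Thus $\sdim(\mathcal F)\le 1+\e$ while the amplitude stays a fixed positive number.

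For part (2) I will instead build a comb: a base arc carrying dead-end teeth $T_n$ of heights $\ell_n$ that now decay, with $\ell_n\asymp 1/\log n$. Because the teeth shrink, $K$ is locally connected and path connected, hence a Peano continuum and so the image of a curve by the Hahn--Mazurkiewicz theorem. That it is not the image of any H\"older curve follows from a counting bound: if $f\colon[0,1]\to K$ is $\beta$-H\"older and onto, then covering the tip of each $T_n$ forces a subinterval $I_n$ with $\diam f(I_n)\ge \ell_n$, so $|I_n|\gtrsim \ell_n^{1/\beta}$; the upper parts of distinct teeth being disjoint, the $I_n$ are disjoint and $\sum_n \ell_n^{1/\beta}\le 1$. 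But $\sum_n(1/\log n)^{1/\beta}=\infty$ for every $\beta\in(0,1]$, a contradiction. The dimension bound is as in part (1): with $s_n=2^{-n}$ the $n$th tooth contributes $\asymp \ell_n s_n^{t-1}$ and $\sum_n(\log n)^{-1}2^{-n\e}<\infty$, giving $\sdim(\mathcal F)\le 1+\e$.

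The main obstacle in both parts is the faithful self-similar realization of the intended coarse topology, rather than the dimension bookkeeping or the soft topology of the idealized models. Since every piece is a rescaled copy of $K$, the teeth and base must be manufactured from small copies of the whole fractal, and one must verify that the resulting genuinely fractal set (including the points of the limit fiber coded by accumulating symbolic sequences, which one must check actually lie in the attractor) has exactly the advertised coarse behavior while respecting both the OSC and the ratio budget. I expect the most delicate point to be the negative statement in part (1): showing that the fractal, self-similar oscillation still cannot be continuously damped, so that the amplitude obstruction to path connectivity survives the passage from the polygonal model to the true attractor.
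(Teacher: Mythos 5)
There is a genuine gap, and it sits exactly where you waved your hands: membership of the limit fiber in the attractor. For an IIFS the attractor is \emph{defined} as $K=\bigcup_{(i_n)}\bigcap_{n}\phi_{i_1}\circ\cdots\circ\phi_{i_n}(X)$, so every point of $K$ lies in $\phi_{i_1}(X)$ for some first letter $i_1$. In your construction all images $\phi_i(X)$ lie in the strips $\{2^{-n}\le x\le 2^{-n+1}\}$, hence every point of $K$ has strictly positive $x$-coordinate: \emph{no} symbolic sequence codes a point of $\{0\}\times[0,c_0]$, so the limit fiber is simply not in $K$, and $K$ is not closed. Saying that ``the closure adds the limit segment'' does not help, because the theorem is about $K$ itself, not $\overline{K}$. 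This is fatal to both parts as you have set them up. In part (1), without the limit segment your attractor is a countable chain of consecutively touching arcs, which is path connected and non-compact — so it is neither a continuum nor non-path-connected, and the topologist's-sine obstruction never gets off the ground. In part (2) the failure is even sharper: the continuous image of $[0,1]$ is compact, so a non-compact attractor cannot be the image of \emph{any} curve, and the positive half of the claim fails.

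The fix is not a verification but a structural change to the IIFS, and it is precisely what the paper does: one must add to $\mathcal{F}$ an extra subfamily of similarities (the paper's $\sigma_i$ in part (1), $\sigma$ in part (2)) whose images tile the accumulation region, so that the limit object is itself a union of rescaled copies of $K$ and hence lies in $K$ by construction; compactness and connectedness are then proved for the nested pre-attractors $K_m=\bigcup_{f\in\mathcal{F}}f(K_{m-1})$ and passed to $K=\bigcap_m K_m$ via the finite-overlap identity \eqref{eq:IIFSattractor}. Once that repair is made, the rest of your outline does align with the paper: the paper's part (1) is also a self-similar topologist's sine curve (with amplitude $1$ and dimension controlled by taking the pieces small, rather than your small-amplitude trick — both bookkeepings are fine), and its part (2) uses the same teeth of height $\asymp 1/\log n$, the same disjoint-interval counting, and the divergence of $\sum_n(\log n)^{-\alpha}$ for every $\alpha$. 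One further caution: the paper does not invoke Hahn--Mazurkiewicz directly for part (2) but proves a parameterization theorem (Theorem \ref{thm:3}) and feeds it a curve through the $1$-skeleton; if you use Hahn--Mazurkiewicz instead you must actually prove local connectedness of $K$ near the accumulation region, which again requires the added $\sigma$-pieces and shrinking teeth.
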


While the first example may not be too surprising, the second example has the additional property that for every two points there exists a Lipschitz curve in the attractor that connects them. A common theme in both these examples is the existence of a ``bad 1-skeleton'' inside the attractor which is not the image of a curve (in the first case) or not the image of a H\"older curve (in the second case). If such bad structures are absent, we show that the attractor admits good parameterizations.

\begin{theorem}\label{thm:3}
Let $\mathcal{F} = \{\phi_i\}_{i\in\N}$ be an IIFS on a compact metric space so that the attractor $K$ is compact, $\lim_{i\to\infty}\Lip(\phi_i)=0$, and there exists a curve $\gamma:[0,1] \to K$ whose image intersects $\phi_i(K)$ for all $i$.
\begin{enumerate}
\item The attractor $K$ is the image of a curve.
\item If $\gamma$ is $\frac1{s}$-H\"older for some $s\geq 1$, then for each $\a>\max\{s,\sdim(\mathcal{F})\}$ the attractor $K$ is the image of a $\frac1{\a}$-H\"older curve.
\end{enumerate}
\end{theorem}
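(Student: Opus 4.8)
The plan is to construct the parametrization by a single recursive scheme that proves both parts at once, using $\gamma$ as a \emph{backbone} and attaching to it, at every scale, scaled copies of the whole construction inside each piece $K_w := \phi_w(K)$, where for a word $w = i_1\cdots i_n$ we write $\phi_w = \phi_{i_1}\circ\cdots\circ\phi_{i_n}$ and $\gamma_w := \phi_w\circ\gamma$. First I would record the two facts that drive every estimate. Since $\Lip(\phi_i)\in(0,1)$ and $\Lip(\phi_i)\to 0$, the number $r := \sup_i \Lip(\phi_i)$ is $<1$, so $\diam K_w \le \Lip(\phi_w)\diam K \le r^{|w|}\diam K \to 0$ uniformly in the level; and for $\alpha > \sdim(\mathcal{F})$ the quantity $\lambda := \sum_i \Lip(\phi_i)^\alpha = \psi_{\mathcal{F}}(\alpha)$ satisfies $\lambda < 1$ (as $\psi_{\mathcal{F}}$ is strictly decreasing where finite and $\psi_{\mathcal{F}}(\sdim(\mathcal{F}))\le 1$). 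I fix for each $i$ a time $\tau_i$ with $\gamma(\tau_i)\in K_i$; by self-similarity $\gamma_w(\tau_i)=\phi_w(\gamma(\tau_i))\in K_{wi}$, so every sub-backbone meets each of its children.

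Next I would set up the time budget. To each finite word $w$ I assign a closed interval $I_w\subseteq[0,1]$ with $|I_w| = \Lip(\phi_w)^\alpha$ (taking $\Lip(\phi_\emptyset):=1$), arranged so that the children $\{I_{wi}\}_i$ are pairwise non-overlapping subintervals of $I_w$ with $\sum_i |I_{wi}| = \lambda|I_w|$, leaving a \emph{backbone portion} of total length $(1-\lambda)|I_w|>0$ on which $\Gamma$ traces $\gamma_w$. This closes up consistently, $|I_w| = (1-\lambda)|I_w| + \sum_i|I_{wi}|$ with $|I_\emptyset| = 1$, precisely because $\lambda<1$. The decisive computation is that reparametrizing $\gamma_w$ monotonically onto the backbone portion of $I_w$, of length $L=(1-\lambda)\Lip(\phi_w)^\alpha$, yields a $\tfrac1\alpha$-H\"older map with constant independent of $w$: if $\gamma$ is $\tfrac1s$-H\"older with constant $H$, the resulting backbone traversal has H\"older constant at most $\Lip(\phi_w)\,H\,L^{-1/\alpha} = (1-\lambda)^{-1/\alpha}H$, where the cancellation of $\Lip(\phi_w)$ uses exactly the surplus exponent $\tfrac1s-\tfrac1\alpha>0$ traded against $|t-t'|\le L$. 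Thus $\alpha>\sdim(\mathcal{F})$ balances the budget and $\alpha>s$ makes the backbone fast enough.

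With the allocation fixed, I would define $\Gamma$ as the uniform limit of curves $\Gamma_n$ that trace $\gamma_w$ for $|w|\le n$ and are constant (at the relevant entry point) on the not-yet-subdivided intervals. The passage from $\Gamma_n$ to $\Gamma_{n+1}$ moves points only within level-$(n+1)$ pieces, hence by at most $r^{n+1}\diam K$, so $(\Gamma_n)$ is uniformly Cauchy and the limit $\Gamma:[0,1]\to K$ is continuous; this already gives part (1), since only continuity of $\gamma$ was used. Surjectivity follows because $\bigcup_w \gamma_w([0,1])$ is dense in $K$ (any $x\in K$ with address $(m_j)$ lies within $r^n\diam K$ of $\gamma_{m_1\cdots m_n}([0,1])\subseteq K_{m_1\cdots m_n}$) while $\Gamma([0,1])$ is compact. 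For part (2) I would prove the H\"older bound by a self-similar induction rather than a single global estimate: for $t,t'$ in a common cell $I_w$, the restriction of $\Gamma$ to each child $I_{wi}$ is the affine reparametrization of $\phi_{wi}\circ\Gamma$, so the within-child estimate reproduces the \emph{same} constant, while the uniform backbone estimate above governs the backbone portion; since the fully-swept intervening children are excursions that return to the backbone, combining the at-most-two boundary children with the backbone traversal gives $d(\Gamma(t),\Gamma(t'))\le C|t-t'|^{1/\alpha}$ with $C$ independent of $w$.

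The main obstacle is continuity of the glued object. The backbone enters $K_i$ at $\gamma(\tau_i)$, whereas the scaled sub-construction on $I_i$ is anchored at $\phi_i(\gamma(0))$, and these two points differ (both lie in $K_i$, so by at most $\diam K_i$). Closing this gap amounts to joining an arbitrary point of $K$ to the backbone by a genuine path in $K$, i.e.\ to path-connectivity of $K$, which is not given a priori and is in fact what forces the construction to be self-referential. I would resolve this by building \emph{connector curves} inside the same recursion: a connector from $\gamma(\tau_i)=\phi_i(q_i)$ to $\phi_i(\gamma(0))$ is $\phi_i$ applied to a connector in $K$ from $q_i$ to $\gamma(0)$, and the latter is defined by first running along $\gamma$ to the first-level piece containing $q_i$ and then recursing one level deeper inside that piece. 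Because every recursion step is precomposed with some $\phi_w$, the connectors have diameter $\le C\,\diam K_w$ and the depth-$n$ corrections are $O(r^n)$ and summable; hence the connectors are themselves continuous, they are absorbed into the backbone portion of each $I_w$ without changing the order of magnitude of any estimate, and as a byproduct they show $K$ is path-connected. Verifying the convergence and the uniform diameter control of this connector recursion is the technical heart of the argument.
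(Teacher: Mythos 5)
Your proposal is correct and takes essentially the same route as the paper's proof: your ``time budget'' governed by $\psi_{\mathcal{F}}(\alpha)<1$ is the paper's reparametrization lemma (Lemma \ref{lem:reparam}, which makes each $\phi_i(K)$-preimage contain an interval of prescribed length), your recursively built ``connector curves'' are exactly the paper's path-connectivity lemma (Lemma \ref{lem:conn}, a connector between points of $\phi_w(K)$ that is $\frac1s$-H\"older with constant $\lesssim L_w$), and the gluing, geometric convergence, density/surjectivity argument, and case-analysis H\"older estimate all match the paper's main construction. The only slip is that $\Lip(\phi_{wi})\leq\Lip(\phi_w)\Lip(\phi_i)$ can be strict, so your exact budget identity $\sum_i|I_{wi}|=\lambda|I_w|$ should either be relaxed to an inequality or stated with the products $L_w=\Lip(\phi_{i_1})\cdots\Lip(\phi_{i_n})$ in place of $\Lip(\phi_w)$, as the paper does; either fix leaves your argument intact.
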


We leave it as an open question whether in the second part of the theorem one can choose $\a=\max\{s,\sdim(\mathcal{F})\}$; this is unknown even for IFSs. Furthermore, in the case that $\mathcal{F}$ is finite, the existence of the curve $\gamma$ in both parts of Theorem \ref{thm:3} is guaranteed by \cite[Theorem 1.1]{BV}. Finally, the condition $\lim_{i\to\infty}\Lip(\phi_i)=0$ is necessary for Theorem \ref{thm:3}; see \textsection\ref{sec:ex3}.

The construction of the two examples of Theorem \ref{thm:1} is given in Section \ref{sec:ex} and we prove Theorem \ref{thm:3} in Section \ref{sec:param}.

\subsection{Symbolic notation}
Here and for the rest of the paper, given a countable (infinite or finite) set $A$ and an integer $n\geq 0$, we denote by $A^n$ the set of words formed from $A$ of length $n$, with the convention $A^0 = \{\varepsilon\}$ and $\varepsilon$ is the empty word. We denote $A^* = \bigcup_{n\geq 0}A^n$ and by $A^{\N}$, the set of infinite words formed with letters from $A$. Given $w = i_1 i_2 \cdots \in A^{\N}$ we denote $w(n) = i_1\cdots i_n$ the truncated sub-word of $w$.

Given an IFS or IIFS $\mathcal{F} = \{f_i : i\in A\}$ and a finite word $w = i_1\cdots i_n \in A^n$, we denote the length $n$ of $w$ by $|w|$ and
\[ f_w = f_{i_1} \circ \cdots \circ f_{i_n}.\]

Given quantities $x,y \geq 0$ and a constant $a>0$ we write $x \lesssim_{a} y$ if there exists a constant $C$ depending on at most $a$ such that $x \leq C y$. If $C$ is universal, we write $x\lesssim y$. We write $x\simeq_{a} y$ if $x\lesssim_{a}y$ and $y\lesssim_{a} x$. 

\section{Parametrizations of infinite IFS attractors}\label{sec:param}

In this section we prove Theorem \ref{thm:3}. We start by proving the simple fact that if an IIFS has similarity dimension equal to 1 and the attractor is a continuum, then the attractor is a line segment. This was shown by Hutchinson for finite IFS of similarities on Euclidean spaces \cite[Remark 3.4]{Hutchinson}.

\begin{proposition}\label{prop:s=1}
Let $K$ be the attractor of an infinite or finite IFS $\mathcal{F}$ on a compact space. If $K$ is a continuum and if $\sdim(\mathcal{F})=1$, then $K$ is isometric to a closed line segment.
\end{proposition}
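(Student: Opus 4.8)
The plan is to reduce the statement to the single equality $\mathcal{H}^1(K) = \diam(K)$ and then invoke the classical fact that a continuum realizing this equality is a metric segment. Since $K$ is compact and connected with $\diam(K) = D < \infty$, the lower bound $\mathcal{H}^1(K) \geq D$ holds for free: choosing $a,b \in K$ with $d(a,b) = D$ and setting $f(x) = d(a,x)$, the map $f \colon K \to \R$ is $1$-Lipschitz and, by connectedness, surjects onto $[0,D]$, so $D = \mathcal{H}^1([0,D]) = \mathcal{H}^1(f(K)) \leq \mathcal{H}^1(K)$. The crux is therefore the matching upper bound $\mathcal{H}^1(K) \leq D$, which is where the hypothesis $\sdim(\mathcal{F}) = 1$ enters.

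To obtain the upper bound, I would first extract from $\sdim(\mathcal{F}) = 1$ the inequality $\sum_{i}\Lip(\phi_i) = \psi_{\mathcal{F}}(1) \leq 1$, using that the infimum defining $\sdim$ is attained. In particular the series converges, so $\Lip(\phi_i) \to 0$ and $r := \sup_i \Lip(\phi_i) < 1$. Iterating $K = \bigcup_{\phi\in\mathcal{F}}\phi(K)$ gives $K = \bigcup_{|w| = n}\phi_w(K)$ for every $n$, and the submultiplicativity of the Lipschitz constants yields
\[
\sum_{|w| = n}\diam(\phi_w(K)) \;\leq\; \diam(K)\sum_{|w|=n}\Lip(\phi_w) \;\leq\; \diam(K)\Big(\sum_i \Lip(\phi_i)\Big)^{n} \;\leq\; D.
\]
Since $\diam(\phi_w(K)) \leq r^{n}D \to 0$, the family $\{\phi_w(K)\}_{|w|=n}$ is an admissible cover of arbitrarily small mesh, whence $\mathcal{H}^1(K) \leq D$ after letting $n \to \infty$. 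Combined with the lower bound this forces $\mathcal{H}^1(K) = \diam(K)$.

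The main obstacle is the final, purely metric step: a continuum $K$ with $\mathcal{H}^1(K) = \diam(K) < \infty$ is isometric to $[0,D]$. Here I would use that a continuum of finite $\mathcal{H}^1$ measure is arcwise connected (a classical rectifiability result), so there is an arc $\gamma \subseteq K$ joining the diametral points $a,b$; its length is at least $d(a,b) = D$, yet $\mathcal{H}^1(\gamma) \leq \mathcal{H}^1(K) = D$, forcing $\gamma$ to have length exactly $D$ and to exhaust $K$ (any point of $K\setminus\gamma$ would, by arcwise connectedness, contribute additional length). Parametrizing $\gamma$ by arclength and playing the triangle inequality $d(a,\gamma(t)) + d(\gamma(t),b) \geq D$ against the length bounds $d(a,\gamma(t)) \leq t$ and $d(\gamma(t),b) \leq D - t$ shows each inequality is an equality, and the same device applied to arbitrary parameters $s \leq t$ gives $d(\gamma(s),\gamma(t)) = t - s$; thus $\gamma$ is the desired isometry. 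I expect cleanly justifying arcwise connectedness and disposing of the degenerate case $D = 0$ to be the only delicate points.
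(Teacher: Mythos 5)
Your proposal is correct and follows essentially the same route as the paper: the covering bound $\mathcal{H}^1(K)\leq \diam(K)$ obtained from $\psi_{\mathcal{F}}(1)\leq 1$ applied to the generation-$n$ images $\{\phi_w(K)\}_{|w|=n}$, the classical rectifiability of continua of finite $\mathcal{H}^1$-measure to extract an arc between diametral points which must exhaust $K$, and a triangle-inequality argument to conclude that this arc is isometric to a segment. The only differences are cosmetic: you prove the lower bound $\mathcal{H}^1(K)\geq \diam(K)$ and the equality $K=\gamma$ explicitly (the paper leaves these implicit), and you phrase the final step via an arclength parametrization rather than via additivity of $\mathcal{H}^1$ over subarcs.
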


\begin{proof}
Write $\mathcal{F} = \{\phi_i : i\in A\}$. We first claim that $\mathcal{H}^1(K) \leq \diam{K}$. Fix $\delta>0$ and let $n\in\N$ such that $(\diam{K})\Lip(\phi_w)<\delta$ for all $w\in \N^n$. Since $\psi_{\mathcal{F}}(1)\leq 1$,
\begin{align*} 
\mathcal{H}^1_{\delta}(K) \leq \sum_{w\in A^n}\diam{\phi_w(K)} &\leq \diam{K}\sum_{i_1,\dots,i_n\in A}\Lip{\phi_{i_1}}\cdots \Lip{\phi_{i_n}}\\ 
&= \diam{K}(\psi_{\mathcal{F}}(1))^n \leq \diam{K}
\end{align*}
and the claim follows by taking $\d\to 0$.
Hence, $K$ is the Lipschitz image of $[0,1]$ \cite[Theorem 4.4]{AO}. Fix $p,q\in K$ such that $d(p,q)=\diam{K}$ and fix an arc $\gamma \subset K$ with endpoints $p,q$. Note that
\[\diam{K} = \mathcal{H}^1(K) \geq \mathcal{H}^1(\gamma) \geq \diam{\gamma} = \diam{K}.\]
Therefore, $K=\gamma$. For each $x,y \in \gamma$ denote by $\g(x,y)$ the subarc of $\gamma$ with endpoints $x,y$.

Let $x,y \in \gamma$ such that $x$ is between $p$ and $y$. Then, 
\begin{align*} 
\diam{\gamma} &\leq d(p,x) + d(x,y) + d(y,q)\\
&\leq \mathcal{H}^1(\gamma(p,x)) + \mathcal{H}^1(\gamma(x,y)) + \mathcal{H}^1(\gamma(y,q))\\ 
&= \mathcal{H}^1(\gamma)\\
&= \diam{\gamma}.
\end{align*}

Therefore, $\mathcal{H}^1(\gamma(x,y)) = d(x,y)$ for all $x,y \in \gamma$, which yields that $K$ is isometric to the line segment $[0,\diam{K}]$.
\end{proof}

The rest of Section \ref{sec:param} is devoted to the proof of Theorem \ref{thm:3}. Henceforth, we assume that we have an IIFS $\mathcal{F}=\{\phi_1,\phi_2,\dots\}$ on a compact space $X$ so that $K$ is compact, $\lim_{i\to\infty}\Lip(\phi_i)=0$, and there exists a curve $\gamma :[0,1] \to K$ whose image intersect each set $\phi_i(K)$. We make some standard reductions. 

First, if $K$ is a point, then the claim of the theorem is trivial. Therefore, we may assume that $K$ is nondegenerate and, rescaling the metric, we may also assume that $\diam(K)=1$. Second, since $\lim_{i\to\infty} \Lip(\phi_i) =0$, we may assume that 
\[ \Lip(\phi_{1}) = \max_{i\in\N} \Lip(\phi_i) .\] 
Third, by traversing the image of $\g$ backwards if necessary, we may assume that $\gamma(0)=\gamma(1)$. For any point $p$ in the image of $\gamma$, by reparameterizing $\gamma$, we may assume that $\gamma(0)=\gamma(1)=p$. Moreover, for any $p,q$ in the image of $\gamma$, by reparameterizing $\gamma$, we may assume that there exists $[a,b] \subset [0,1]$ such that $\gamma(a)=p$ and $\gamma(b)=q$.

Here and for the rest of this section, given $w=i_1 i_2 \dots i_n \in\mathbb{N}^*$, we denote 
\[L_w=\Lip(\phi_{i_1})\Lip(\phi_{i_2})\dots\Lip(\phi_{i_n})\] 
with the convention $L_{\varepsilon}=1$. Note that in general, $L_w \geq \Lip(\phi_w)$.

\subsection{Path connectedness of IIFS attractors}\label{sec:thm3}

The first step in the proof of Theorem \ref{thm:3} is the following lemma which shows that $K$ is pathwise connected.

\begin{lemma}\label{lem:conn}
Let $w\in \N^*$ and $x,y \in \phi_w(K)$. There exists a continuous map $f: [0,1] \to \phi_w(K)$ such that $f(0) = x$ and $f(1)=y$. If $\gamma$ is $\frac{1}{s}$-H\"older for some $s>\sdim(\mathcal{F})$ and with H\"older constant $H_0$, then $f$ can be chosen to be $\frac{1}{s}$-H\"older with H\"older constant $H \lesssim_{\psi_\mathcal{F}(s),s,H_0,L_1} L_w$,. 
\end{lemma}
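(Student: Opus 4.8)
My plan is to deduce the lemma from the construction of a single closed curve that covers all of $K$, and then read off the required connection as a rescaled sub-arc.

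\emph{Reduction.} Since $x,y\in\phi_w(K)$ and $\phi_w$ is $\Lip(\phi_w)$-Lipschitz with $\Lip(\phi_w)\le L_w$, post-composition with $\phi_w$ turns a connection inside $K$ into one inside $\phi_w(K)$ at the cost of a factor $L_w$ in the H\"older constant: if $g:[0,1]\to K$ is $\frac1s$-H\"older with constant $C$, then $\phi_w\circ g$ maps into $\phi_w(K)$ and is $\frac1s$-H\"older with constant at most $L_w C$. Thus it suffices to build \emph{one} closed curve $S:[0,1]\to K$ with $S(0)=S(1)=p$ (where $p=\gamma(0)=\gamma(1)$) whose image is all of $K$ and which is $\frac1s$-H\"older with constant $C=C(\psi_{\mathcal{F}}(s),s,H_0,L_1)$. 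Indeed $\phi_w\circ S$ is then a closed curve covering $\phi_w(K)$; since $x,y$ lie on its image, the sub-arc joining the corresponding parameters, affinely rescaled to $[0,1]$, is the desired $f$, and rescaling a time-interval of length $\le 1$ only improves the constant.

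\emph{Construction of $S$.} I would define $S$ self-similarly: traverse $\gamma$ from $p$ to $p$, and for each $i\in\N$ choose a time $t_i$ with $\gamma(t_i)\in\phi_i(K)$ (which exists by hypothesis) and, at parameter $t_i$, splice in a scaled copy $\phi_i\circ S$ of the whole curve, re-based so that it begins and ends at $e_i:=\gamma(t_i)$. The point that makes the estimate sharp is that $\phi_i\circ S$ is a \emph{closed} loop whose image is $\phi_i(K)\ni e_i$, so it already passes through $e_i$; re-basing it there (starting the loop at a parameter mapping to $e_i$) inserts no additional connecting arcs, and the splice is seamless because it starts and ends at $\gamma(t_i)$. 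Allotting the copy $\phi_i\circ S$ a time-length equal to $\Lip(\phi_i)^s$ times the total, the total time $\mathcal{T}$ satisfies
\[ \mathcal{T}=\tau_0+\sum_{i\in\N}\Lip(\phi_i)^s\,\mathcal{T}=\tau_0+\psi_{\mathcal{F}}(s)\,\mathcal{T},\]
where $\tau_0$ is the time given to one pass of $\gamma$; hence $\mathcal{T}=\tau_0/(1-\psi_{\mathcal{F}}(s))<\infty$ precisely because $s>\sdim(\mathcal{F})$ forces $\psi_{\mathcal{F}}(s)<1$. This is the only point at which the hypothesis $s>\sdim(\mathcal{F})$ and the quantity $\psi_{\mathcal{F}}(s)$ are used. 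Rigorously, $S$ is the uniform limit of its finite-depth approximations (detouring only into words of length $\le n$); convergence, continuity, and surjectivity onto $K$ follow from the geometric decay $\diam\phi_v(K)\le\Lip(\phi_v)\le L_1^{|v|}\to0$ together with the summability of the detour times.

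\emph{H\"older bound and main obstacle.} For $0\le t<t'\le\mathcal{T}$, let $v$ be the longest word with $S(t),S(t')\in\phi_v(K)$ (it exists since the pieces shrink to points, using $L_1<1$); then $d(S(t),S(t'))\le\diam\phi_v(K)\le L_v$. The content of the estimate is the matching lower bound $|t-t'|\gtrsim_{H_0,s}L_v^s$: by maximality the two points are separated at level $v$, and when they lie on the connective copy $\phi_v\circ\gamma$ this combines the allotted time-rate $\Lip(\phi_v)^s$ with the given bound $d(\phi_v\gamma(\sigma),\phi_v\gamma(\sigma'))\le\Lip(\phi_v)H_0|\sigma-\sigma'|^{1/s}$, while if one of them sits in a child detour it reduces to the same estimate one level down. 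This yields $d(S(t),S(t'))\lesssim_{H_0,s}|t-t'|^{1/s}$; rescaling $[0,\mathcal{T}]$ to $[0,1]$ multiplies the constant by $\mathcal{T}^{1/s}=(\tau_0/(1-\psi_{\mathcal{F}}(s)))^{1/s}$, giving the claimed dependence on $\psi_{\mathcal{F}}(s),s,H_0$, with $L_1$ entering through the geometric decay used above. Part (1), where $\gamma$ is only continuous, uses the same construction with any summable time allocation compatible with $\diam\phi_v(K)\to0$. I expect the main obstacle to be the bookkeeping of the infinite nested splicing — showing $S$ is a well-defined, continuous, surjective curve when infinitely many detours (recursively, one family per letter) are inserted along $\gamma$ at times that may accumulate — and tracking the H\"older constant faithfully through the affine rescaling; the re-basing trick is precisely what keeps the recursion constant equal to $\psi_{\mathcal{F}}(s)$ rather than a fixed multiple of it.
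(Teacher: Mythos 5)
Your reduction step is fine (post-composing with $\phi_w$ costs a factor $L_w$, and restricting/rescaling a sub-arc only improves the constant), but it inverts the paper's logic: the paper proves this lemma \emph{first} and then uses it as the building block for the space-filling parameterization of Theorem \ref{thm:3}, whereas you propose to build the space-filling loop $S$ first and extract the lemma from it. That inversion is where the proposal breaks. The ``re-basing trick'' on which everything rests is circular as stated: to splice $\phi_i\circ S$ seamlessly at time $t_i$ you must start the inner loop at a parameter $u_i^*$ with $S(u_i^*)=z_i$, where $\phi_i(z_i)=\gamma(t_i)$. Such a parameter exists only for the completed, surjective curve $S$; it does not exist for the finite-depth approximations through which you propose to define $S$, because $z_i$ is a generic point of $K$ (its address is an infinite word) and need not lie on the image of any finite-depth approximation, which is a union of scaled copies of the image of $\gamma$. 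So the recursion ``$S_{n+1}=\gamma$ with detours $\phi_i\circ(\text{re-based }S_n)$'' is not defined. The natural repair --- start the inner loop at the definite point $\phi_i(p)$ and join $\gamma(t_i)$ to $\phi_i(p)$ by a path inside $\phi_i(K)$ --- is exactly the statement of the lemma being proved. (Approximate re-basing, using that depth-$n$ curves pass within $L_1^{n+1}$ of $z_i$, could in principle be made to work, but then the finite-stage splices are no longer seamless and uniform convergence of the shifted copies requires equicontinuity, i.e.\ uniform H\"older bounds on the approximations, so the construction cannot be separated from the estimate as your outline assumes.)

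The second gap is in the H\"older estimate itself: the claimed lower bound ``$|t-t'|\gtrsim L_v^s$ when the two points are separated at level $v$'' is false for your structure. Because your splices are seamless, two distinct child detours of $v$ can be adjacent in time; indeed the splice times may coincide, since nothing prevents $\gamma$ from meeting $\phi_i(K)$ and $\phi_j(K)$ at the same point (the paper's Lemma \ref{lem:reparam} explicitly allows $x_n=x_m$ and builds the ``fan'' $E$ precisely to separate such coincidences). Then points in different children of $v$ can have time separation arbitrarily small compared to $L_v^s$. The paper's construction is engineered to avoid this: between distinct children it always inserts buffer intervals of definite length $\simeq (L_{\w(J)})^s$ (the bridge intervals in $\mathscr{B}$, resp.\ the intervals $J,J'$ in the proof of Theorem \ref{thm:3}), and every case of its H\"older analysis leans on that separation, pairing the crude diameter bound $L_{\w(J)}$ against the guaranteed gap $(L_{\w(J)})^s$. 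Without buffers, your estimate must instead control $d(S(t),S(e))$, where $e$ is an endpoint of the detour containing $t$; but $S(e)$ is a re-basing point sitting at \emph{infinite} depth in the nesting, so this is not ``the same estimate one level down'' --- it unrolls into an infinite sum of cross-level terms whose geometric convergence (with a constant independent of scale, despite exact self-similarity forcing the within-detour constant to equal the global one) is the real content of the bound, and it is not supplied.
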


\begin{proof}
Clearly, we may assume that $x\neq y$. Moreover, it suffices to assume that $w=\varepsilon$, and that there is no $j\in \N$ so that $x,y\in\phi_j(K)$, as otherwise we could pass to the longest common word. By the Kuratowski embedding theorem, we may further assume that $K$ is a subset of $\ell_\infty$. 

We construct a sequence of continuous maps $(f_n : [0,1] \to \ell_{\infty})_{n\geq 0}$, sequences of finite collections of closed nondegenerate intervals $(\mathscr{B}_n)_{n\geq 0}$, $(\mathscr{E}_n)_{n\geq 0}$ in $[0,1]$, and an injection $\w: \bigcup_{n\geq 0}\mathscr{E}_n \to \N^*$ with the following properties.
\begin{enumerate}
\item[{(P1)}] For each $n\in \N$, intervals in $\mathscr{B}_{n}\cup \mathscr{E}_n$ intersect only at endpoints and the union of all these intervals is $[0,1]$.
\item[{(P2)}] For each $J \in \mathscr{E}_n$ there exists unique $S\in \mathscr{B}_{n+1}$ such that $S \subset J$. Conversely, for each $S \in \mathscr{B}_{n+1}\setminus \mathscr{B}_{n}$ there exists unique $J_S\in \mathscr{E}_n$ such that $S \subset J_S$. 
\item[{(P3)}] For any $n \geq 0$ and any $J \in \mathscr{E}_{n+1}$, there exists unique $J' \in \mathscr{E}_n$ such that $J \subset J'$. Moreover, there exists $u\in \N^*$ such that $|u|>0$ and $\w(J) = \w(J')u$.
\item[{(P4)}] For each $n\geq 0$ and $J \in \mathscr{E}_n$, there exist distinct $i,j \in \N$ such that $f_n|J$ is a linear map mapping the left endpoint in $\phi_{\w(J)i}(K)$ and the right endpoint in $\phi_{\w(J)j}(K)$. 
\item[{(P5)}] For each $n\geq 1$, and each $S\in \mathscr{B}_n$ there exists a closed nondegenerate interval $I$ and a linear map $\zeta: S\to I$ such that
\[f_{n+1}|S = f_{n}|S = \phi_{\w(J_S)} \circ (\gamma|I) \circ \zeta.\]
\item[{(P6)}] For each $n\geq 0$, $f_n(0) = x$ and $f_n(1)=y$. 
\item[{(P7)}] For each $n\geq 0$, and each $J \in \mathscr{E}_n$, $\|f_n-f_{n+1}\|_{J, \infty} \leq 2L_{\w(J)}$.
\end{enumerate}

Before the construction of $(f_n)_n$, $(\mathscr{B}_{n})_n$, $(\mathscr{E}_{n})_n$, and $\w$ we remark that (P3) and a simple induction yield that
\begin{enumerate}
\item[{(P8)}] For all $n\in\N$ and all $J \in \mathscr{E}_{n}$, $|\w(J)| \geq n$.
\end{enumerate}

The construction is done in an inductive fashion. Define $\mathscr{E}_0 = \{[0,1]\}$, $\mathscr{B}_0 = \emptyset$, $\w([0,1]) = \varepsilon$, and $f_0 : [0,1] \to \ell_{\infty}$ to be the linear map with $f_0(0) = x$ and $f_0(1)=y$. Property (P4) is immediate while the rest of the properties are vacuous.

Assume now that for some integer $n\geq 0$ we have defined a continuous map $f_n: [0,1] \to \ell_{\infty}$, collections $\mathscr{B}_{n}$, $\mathscr{E}_n$ in $[0,1]$, and an injection $\w: \bigcup_{k=0}^n\mathscr{E}_k \to \N^*$ with properties (P1)--(P7). The new collections of intervals will be
\[ \mathscr{B}_{n+1} = \mathscr{B}_n \cup \bigcup_{J \in \mathscr{E}_n}\mathscr{B}_{n+1}(J), \qquad \mathscr{E}_{n+1} =  \bigcup_{J \in \mathscr{E}_n}\mathscr{E}_{n+1}(J).\]
If $S\in \mathscr{B}_n$, then we set $f_{n+1}|S = f_n|S$.

Fix now $J\in \mathscr{E}_n$ and write $J = [t,s]$. By (P3) there exist distinct $i,j \in\N$ such that $|\w(J)|\geq n$, $f_n(t) \in \phi_{\w(J)i}(K)$, and $f_n(s) \in \phi_{\w(J)j}(K)$. There also exists an interval $I=[a,b] \subset [0,1]$ such that $\gamma(a) \in \phi_i(K)$ and $\gamma(b) \in \phi_j(K)$. We consider three possible cases.

\emph{Case I:} Suppose that $\phi_{\w(J)}\circ\gamma(a) = f_n(t)$ and $\phi_{\w(J)}\circ\gamma(b) = f_n(s)$. Set $\mathscr{B}_{n+1}(J) = \{J\}$,  $\mathscr{E}_{n+1}(J) = \emptyset$, and $f_{n+1}|J = \phi_{\w(J)} \circ (\gamma|I) \circ \zeta$ where $\zeta : J \to I$ is the orientation preserving linear map.

\emph{Case II:} Suppose that $\phi_{\w(J)}\circ\gamma(a) \neq f_n(t)$ and $\phi_{\w(J)}\circ\gamma(b) = f_n(s)$. Let $z \in (t,s)$ and let $u\in \N^*$ be the shortest word such that there exist distinct $i,j\in \N$ with $\phi_{\w(J)}\circ\gamma(a) \in \phi_{\w(J)ui}(K)$ and $f_n(t) \in \phi_{\w(J)uj}(K)$. By (P4) we have that $|u|\geq 1$. Set $\mathscr{E}_{n+1}(J) = \{[t,z]\}$,  $\mathscr{B}_{n+1}(J) = \{[z,s]\}$, and $\w([t,z]) = \w(J)u$. Define $f_{n+1}$ on $J$ continuously so that $f_{n+1}|[z,s]$ is as in Case I, and $f_{n+1}|[t,z]$ is linear with $f_{n+1}(t) = f_n(t)$. We work similarly if $\phi_{\w(J)}\circ\gamma(a) = f_n(t)$ and $\phi_{\w(J)}\circ\gamma(b) \neq f_n(s)$.

\emph{Case III:} Suppose that $\phi_{\w(J)}\circ\gamma(a) \neq f_n(t)$ and $\phi_{\w(J)}\circ\gamma(b) \neq f_n(s)$. Let $t<z<w<s$ and set $\mathscr{E}_{n+1}(J) = \{[t,z], [w,s]\}$,  $\mathscr{B}_{n+1}(J) = \{[z,w]\}$. For the definitions of $\w([t,z])$, $\w([w,s])$, and $f_{n+1}|J$ we work as in Cases I, II.

Properties (P1)--(P7) are easy to verify. Since $\w$ is injective on $\bigcup_{k=0}^n\mathscr{E}_k$, then by (P3) and the fact that $\w$ is injective on $\mathscr{E}_{n+1}(J)$ for each $J\in \mathscr{E}_n$ we have that $\w$ is injective on $\bigcup_{k=0}^{n+1}\mathscr{E}_k$. Finally, continuity of $f_{n+1}$ follows from the facts that $f_{n+1}$ is the same as $f_n$ outside of intervals in $\mathscr{E}_n$, that $f_{n+1}|J$ is continuous for all $J\in \mathscr{E}_n$, and that $f_{n+1}|\partial J = f_{n}|\partial J$ for all $J\in \mathscr{E}_n$. This completes the inductive construction.

From (P5) and (P7) we have that $(f_n)_{n\in\N}$ converges to a continuous map $f:[0,1] \to \ell_{\infty}$. Fix $J\in \bigcup_{n\geq 0}\mathscr{E}_n$. By (P5) we have that for all $S\in \bigcup_{n\geq 0}\mathscr{B}_n$ with $S\subset J$, $f(S)\subset \phi_{\w(J)}(K)$. By this fact, by (P3), and by (P4) we have that for all $m\geq n$,
\[ \sup_{t\in J}\dist(f_m(t),\phi_{\w(J)}(K)) \leq \max_{\substack{J' \subset J \\ J' \in \mathscr{E}_m}} 2L_{\w(J')} \leq 2L_{\w(J)}L_1^{m-n}.\]
Since $K$ is closed, it follows that $f(J) \subset \phi_{\w(J)}(K)$. This proves the first part of the lemma.

Assume now that $\gamma$ is $\frac{1}{s}$-H\"older for some $s>\sdim(\mathcal{F})$ and with H\"older constant $H_0$. Then, $\psi_{\mathcal{F}}(s) \in (0,1)$. Set $\mathscr{E} = \bigcup_{n\geq 0}\mathscr{E}_n$ and $\mathscr{B} = \bigcup_{n\geq 0}\mathscr{B}_n$. For each $J \in \mathscr{E}$, set 
\[ \mathcal{M}(J) := \sum_{\substack{J'\in \mathscr{E}\\ J'\subset J}}(L_{\w(J')})^{s}.\]
By injectivity of $\w$ and (P3) we have that for each $J \in \mathscr{E}$
\begin{align*}
 (L_{\w(J)})^s \leq \mathcal{M}(J) \leq \sum_{w\in \N^*}(L_{\w(J)w})^{s}  = (L_{\w(J)})^s \sum_{n=0}^{\infty} \left( \sum_{i\in\N}L_i^{s} \right)^n = \frac{ (L_{\w(J)})^s}{1-\psi_{\mathcal{F}}(s)}.
\end{align*}

The only difference in the construction in this case is that we require that for all $S\in \mathscr{B}$ and all $J\in \mathscr{E}$
\[ |S| = (L_{\w(J_S)})^{s}/\mathcal{M}([0,1])\qquad \text{and}\qquad |J| = \mathcal{M}(J)/\mathcal{M}([0,1]).\]
To see why this is possible, note that if $J\in \mathscr{E}_n$ and $\mathscr{B}_{n+1}(J)=\{S\}$ for some $n\geq 0$, then
\[ |J| = \frac{(L_{\w(J)})^{s}}{\mathcal{M}([0,1])} + \sum_{\substack{J'\in \mathscr{E}\\ J'\subsetneq J}}\frac{(L_{\w(J')})^{s}}{\mathcal{M}([0,1])} = |S| + \sum_{J' \in \mathscr{E}_{n+1}(J)}|J'|.\]

We claim that the map $f$ defined above is $\frac{1}{s}$-H\"older continuous with H\"older constant depending only on $\psi_{\mathcal{F}}(s)$, $L_1$, $s$, and $H_0$. To show the claim, fix $p,q\in [0,1]$. Clearly, we may assume that $p\neq q$. There exist $n\in\N$ and $J\in \mathscr{E}_n$ so that $p,q\in J$ and $n$ is maximal. Proving the claim falls to a case study. 

\emph{Case 1.} Suppose that $p\in J_{1}$ and $q\in J_{2}$ where $J_1,J_2 \in \mathscr{E}_{n+1}(J)$ are distinct. On the one hand 
\[ |p-q|\geq |S| \gtrsim_{s,\psi_{\mathcal{F}}(s)} (L_{\w(J)})^s \]
while on the other hand, by (P5) and (P7), $f(p),f(q) \in \phi_{\w(J)}(K)$, so
\begin{align*} 
d(f(p),f(q)) \leq L_{\w(J)}.
\end{align*}

\emph{Case 2.} Suppose that $p,q\in S$ where $S \in \mathscr{B}_{n+1}(J)$. By (P5), 
\[ d(f(p),f(q))\leq  L_{\w(J)} |S|^{-1/s} H_0 |p-q|^{1/s} \lesssim_{s,\psi_{\mathcal{F}}(s),H_0}|p-q|^{1/s}.\]

\emph{Case 3.} Suppose that $p\in J_{1}\setminus S$ and $q\in S\setminus J_1$ where $J_1 \in \mathscr{E}_{n+1}(J)$ and $S \in \mathscr{B}_{n+1}(J)$. There exists integer $m\geq n+1$ and $J' \in \mathscr{E}_{m}$ such that $p\in J' \subset J_1$, $J' \cap S \neq \emptyset$, and if $S' \in \mathscr{B}_{m+1}(J')$, then $S'$ separates $p$ from $q$. Let $z$ be the unique point in $J' \cap S$. By Case 1 for $p,z$ and Case 2 for $z,q$
\begin{align*} 
d(f(p),f(q)) &\leq d(f(p),f(z)) + d(f(z),f(q)) \lesssim_{L_1,s,\psi_{\mathcal{F}}(s),H_0} |p-q|^{1/s}
\end{align*}
which completes the proof of the claim.
\end{proof}

\subsection{Parameterizations of IIFS attractors}

The second step in the proof of Theorem \ref{thm:3} is the following lemma that allows us to reparametrize $\gamma$ so that preimages of $\phi_i(K)$ have nonempty interior.

\begin{lemma}\label{lem:reparam}
Let $p$ be in the image of $\g$. There exists a map $\G: [0,1] \to K$ and a collection of nondegenerate closed intervals $\{\mathcal{I}_n\}_{n\in\N}$ with disjoint interiors such that $\G$ has the same image as $\g$, satisfies $\G(0)=\G(1)=p$, and  for each $n\in\N$, $\mathcal{I}_n \subset \G^{-1}(\phi_n(K))$. Moreover, if $\g$ is $\frac{1}{s}$-H\"older with constant $H_0$, and if $(a_n)\in \ell^1$ is a sequence of positive numbers, then $\G$ is $\frac{1}{s}$-H\"older with constant $H\leq 2^{1/s}H_0(1+\|(a_n)\|_1)^{1/s}$ and for each $n\in\N$, $|\mathcal{I}_n| =a_n(1 + \|(a_n)\|_1)^{-1}$.
\end{lemma}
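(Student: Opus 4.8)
The plan is to reparametrize $\g$ by a monotone time change that pauses the curve at one point of each $\phi_n(K)$, inserting there a constant interval of length proportional to $a_n$. For each $n$ I would pick $t_n\in[0,1)$ with $\g(t_n)\in\phi_n(K)$; such a point exists because the image of $\g$ meets $\phi_n(K)$, and should the only such point be $t=1$, I may instead take $t_n=0$ since $\g(0)=\g(1)=p$. Writing $S=\|(a_n)\|_1$, I define the strictly increasing function $F:[0,1]\to[0,1+S]$ by
\[ F(t)=t+\sum_{n:\,t_n<t}a_n. \]
Then $F(0)=0$, $F(1)=1+S$, and $F$ has an upward jump of size $a_n$ at each $t_n$ (jumps stacking when several $t_n$ coincide), so the range of $F$ omits exactly the open intervals $(F(t_n),F(t_n)+a_n)$.

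First I would let $\sigma:[0,1+S]\to[0,1]$ be the generalized inverse $\sigma(u)=\sup\{t\in[0,1]:F(t)\le u\}$: it inverts $F$ on the range of $F$ and is constantly equal to $t_n$ on each gap $[F(t_n),F(t_n)+a_n]$. Being monotone and surjective onto the interval $[0,1]$, $\sigma$ is automatically continuous. I then set $\G(v)=\g\big(\sigma((1+S)v)\big)$ for $v\in[0,1]$ and $\mathcal{I}_n=(1+S)^{-1}[F(t_n),F(t_n)+a_n]$. Since $\sigma$ is surjective, $\G$ has the same image as $\g$; since $\sigma(0)=0$ and $\sigma(1+S)=1$, we get $\G(0)=\G(1)=p$; the intervals $\mathcal{I}_n$ have disjoint interiors with $|\mathcal{I}_n|=a_n(1+S)^{-1}$; and on $\mathcal{I}_n$ the map $\G$ is constantly $\g(t_n)\in\phi_n(K)$, so $\mathcal{I}_n\subset\G^{-1}(\phi_n(K))$. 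This establishes every conclusion except the H\"older bound.

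For the H\"older estimate the key observation is that $\sigma$ is $1$-Lipschitz: because $t\mapsto F(t)-t$ is nondecreasing we have $F(t')-F(t)\ge t'-t$ for $t\le t'$, and unwinding this through the definition of $\sigma$ (using $u\le F(\sigma(u)^+)$ and $F(\sigma(u'))\le u'$) yields $\sigma(u')-\sigma(u)\le u'-u$ for $u\le u'$. Hence if $\g$ is $\frac1s$-H\"older with constant $H_0$, then for $v,v'\in[0,1]$
\[ d(\G(v),\G(v'))\le H_0\,|\sigma((1+S)v)-\sigma((1+S)v')|^{1/s}\le H_0\big((1+S)|v-v'|\big)^{1/s}, \]
so $\G$ is $\frac1s$-H\"older with constant $H_0(1+S)^{1/s}\le 2^{1/s}H_0(1+\|(a_n)\|_1)^{1/s}$, as claimed (indeed with room to spare).

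The main obstacle is not any single estimate but making the time change well defined and continuous despite the countably many, and possibly accumulating, pause points $t_n$; phrasing the construction through the genuine strictly increasing function $F$ and its monotone inverse $\sigma$ sidesteps this, since a monotone surjection between intervals is continuous no matter how the jumps of $F$ distribute or accumulate. The remaining work is bookkeeping: ensuring $t_n\in[0,1)$ so that $F(1)=1+S$, and checking that coincident values $t_n=t_m$ merely split one jump into adjacent gaps, keeping the $\mathcal{I}_n$ pairwise non-overlapping.
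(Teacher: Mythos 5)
Your proof is correct, but it takes a genuinely different route from the paper's. You reparametrize $\g$ directly by a monotone time change: the jump function $F(t)=t+\sum_{t_n<t}a_n$, its generalized inverse $\sigma$ (continuous because a monotone surjection between intervals cannot jump, and $1$-Lipschitz because $F-\mathrm{id}$ is nondecreasing), and $\G=\g\circ\sigma\circ(\text{scaling})$. The paper instead builds an auxiliary ``comb'' $E\subset\R^3$, consisting of $\{0\}\times[0,1]$ with a whisker of length $a_n$ attached in direction $e^{2\pi i/n}$ at each chosen point of $\g^{-1}(\phi_n(K))$; it parametrizes $E$ by the Hahn--Mazurkiewicz theorem (for the continuous statement) or by a constant-speed Lipschitz surjection from \cite[Theorem 4.4]{AO} (for the H\"older statement), and then collapses each whisker to its base point, the intervals $\mathcal{I}_n$ being the time spent in the $n$-th whisker. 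Your argument is more elementary---it needs neither Hahn--Mazurkiewicz nor the Lipschitz parametrization theorem, handles the possibly accumulating pause points for free via monotonicity, and even yields the slightly better constant $H_0(1+\|(a_n)\|_1)^{1/s}$. What the paper's construction buys is robustness: distinct indices $n,m$ with the same base point automatically get disjoint whiskers, and the same comb technique is invoked again later (in the proof of Lemma \ref{prop:curve}) to parametrize comb-like subsets of an attractor, something a pure time change of a given curve cannot do. Two small points to tidy in your write-up: as literally defined, $\mathcal{I}_n=(1+S)^{-1}[F(t_n),F(t_n)+a_n]$ and $\mathcal{I}_m$ share their left endpoint and overlap whenever $t_n=t_m$, so you must (as you indicate at the end) stack coincident gaps into adjacent subintervals of the single gap of length $a_n+a_m+\cdots$, which still lie in $\G^{-1}(\phi_n(K))$ and $\G^{-1}(\phi_m(K))$ respectively since $\g(t_n)$ lies in both sets; and for the first assertion of the lemma, where no sequence $(a_n)$ is given, you should say explicitly that you run the construction with an arbitrary summable positive sequence, e.g.\ $a_n=2^{-n}$.
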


\begin{proof}
We may assume that $\g(0)=\g(1)=p$. For each $n\in\N$ fix a point $x_n \in \gamma^{-1}(\phi_n(K))\subset [0,1]$. It is possible that for some $n\neq m$ we have $x_n=x_m$. Let $\{p_k\}_{k\in B}$ be an enumeration of the set $\{x_n\}_{n\in\N}$ where $B$ is either a finite set, or $\N$. For each $k\in B$, define $A_k = \{n\in\N : x_n =p_k\}$. 

For the first claim, fix a decreasing sequence $(b_n)_{n\in\N}$ of positive numbers that converges to 0. Identify $\R^3$ with $\mathbb{C}\times \R$ and define the set
\[ E = \left(\{0\}\times[0,1]\right) \cup \bigcup_{k\in B}\bigcup_{n\in A_k}\left( \{t e^{2\pi i/n} : t\in [0,b_n]\}\times\{p_k\}\right) \subset \R^3.\]
Since $b_n\to 0$, it is easy to see that $E$ is closed. Moreover, there exists a continuous increasing map $\eta: [0,1] \to [0,1]$ such that $\eta(0)=0$,
\[ \max_{n\in A_{k}}b_n \leq \eta\left( \min_{j\in\{1,\dots,k-1\}}|p_j-p_k|  \right)\qquad\text{for all $k\in B$}\] 
and
\[ b_n \leq \eta\left( \min_{m\in\{1,\dots,n-1\}} |e^{2\pi i/m}-e^{2\pi i/n}|  \right)\qquad\text{for all $n\in \N$}.\] 

We claim that there exists a continuous $\omega :[0,1]\to [0,1]$ with $\omega(0)=0$ such that for all $x,y \in E$, there exists a curve $\sigma:[0,1]\to E$ such that $\sigma(0)=x$, $\sigma(1)=y$ and the diameter of its image is at most $\omega(|x-y|)$. The proof of the claim is a simple case study.

If $x,y \in \{0\}\times[0,1]$, then use the line segment $[x,y]$. 

If $x = (t e^{2\pi i/n},p_k)$ and $y\in \{0\}\times [0,1]$ for some $t \in [0,b_n]$, $n\in A_k$ and $k\in B$, then use the union of line segments $[x,(0,p_k)] \cup [(0,p_k),y]$. 

If $x=(t_1 e^{2\pi i/n},p_k)$ and $y=(t_2 e^{2\pi i/n},p_k)$ for some distinct $t_1,t_2 \in [0,b_n]$, $n\in A_k$ and $k\in B$, then use the line segment $[x,y]$. 

If $x=(t_1 e^{2\pi i/n},p_k)$ and $y=(t_2 e^{2\pi i/m},p_k)$ for some $t_1,t_2 \in [0,b_n]$, $n,m\in A_k$ with $m<n$, and $k\in B$, then use the union of line segments $[x,(0,p_k)] \cup [(0,p_k),y]$. Note that
\begin{align*} 
|x-y| &\gtrsim  |t_1-t_2| + \min\{t_1,t_2\} |e^{2\pi i/m}-e^{2\pi i/n}| \\
&\geq |t_1-t_2| + \min\{t_1,t_2\} \eta^{-1}(\min\{t_1,t_2\})
\end{align*}
while
\begin{align*} 
\diam(\sigma([0,1])) &\leq t_1+t_2 \leq |t_1-t_2| + \min\{t_1,t_2\}.
\end{align*}

Finally, suppose that $x = (t_1 e^{2\pi i/m},p_k)$ and $y = (t_2 e^{2\pi i/n},p_j)$, for some distinct $k,j \in B$, $m\in A_k$, $n\in A_j$, $t_1 \in [0,b_m]$, and $t_2 \in [0,b_n]$. Then, 
\[ |x-y| \gtrsim |p_k-p_j| + |t_1-t_2|\] 
and if $\sigma$ is the union of line segments $[x,(0,p_k)]\cup [(0,p_k),(0,p_j)] \cup [(0,p_j),y]$,
\begin{align*} 
\diam(\sigma([0,1])) \leq t_1+t_2 + |p_k-p_j| &\leq |t_1-t_2| + \min\{t_1,t_2\} + |p_k-p_j|\\
&\lesssim |x-y| + \eta(|p_k-p_j|)
\end{align*}
and the proof of the claim is complete.

Thus, $E$ is connected and locally connected, and by the Hahn-Mazurkiewicz Theorem \cite[Theorem 3.30]{HY}, there exists continuous surjection $g:[0,1] \to E$. Note that for each $n\in\N$, the preimage
\[ g^{-1}(\{t e^{2\pi i/n} : t\in [0,b_n]\}\times\{x_n\})\]
contains a nondegenerate closed interval. Define now $\tilde{\g} : E \to K$ by $\tilde{\g}|\{0\}\times[0,1] = \g$ and for each $n\in\N$ and $t\in[0,b_n]$, $\tilde{\g}(t e^{2\pi i/n},x_n) = \g(x_n)$. Then $\tilde{\g}$ is continuous and $\G:= \tilde{\g}\circ g$ satisfies the conclusions of the lemma.

For the second part of the lemma, assume that $\g$ is $\frac1s$-H\"older with constant $H_0$, and assume that $(a_n)\in \ell^1$ is a sequence of positive numbers. Define $E$ as above replacing $b_n$ by $a_n$. Then, 
\[ \mathcal{H}^1(E) = 1 + \|(a_n)\|_1 <\infty.\]
Therefore, by \cite[Theorem 4.4]{AO}, there exists a Lipschitz surjection $g:[0,1] \to E$ with constant speed equal to $2\mathcal{H}^1(E)$. Thus, for each $n\in\N$, $g^{-1}(\{t e^{2\pi i/n} : t\in [0,a_n]\}\times\{x_n\})$ contains a closed subinterval of length
\[ (1+\|(a_n)\|_1)^{-1}a_n.\]
Define $\tilde{\g}$ as above and note that $\tilde{\g}$ is $\frac1{s}$-H\"older with constant $H_0$. Setting $\G := \tilde{\g}\circ g$, we have for all $x,y\in [0,1]$
\[ d(\G(x),\G(y)) \leq H_0 |g(x) - g(y)|^{1/s} \leq 2^{1/s}H_0(1+ \|(a_n)\|_1)^{1/s} |x-y|^{1/s}. \qedhere\]
\end{proof}

We are now ready to prove Theorem \ref{thm:3}.

\begin{proof}[Proof of Theorem \ref{thm:3}]
Fix $p_0 \in K$. We construct a sequence of continuous maps $(f_n:[0,1]\to K)_{n\geq 0}$, sequences of collections of closed nondegenerate intervals $(\mathscr{N}_n)_{n\geq 0}$, $(\mathscr{I}_n)_{n\geq 0}$ in $[0,1]$, and a bijection $\w:\bigcup_{n\geq 0}\mathscr{N}_n \to \N^*$ with the following properties.
\begin{enumerate}
\item[{(P1)}] For each $n\geq 0$, intervals in $\mathscr{N}_{n}\cup \mathscr{I}_n$ intersect at most on endpoints. 
\item[{(P2)}] For each $n\geq 0$, each $I \in \mathscr{N}_n$ and each $i\in \N$, there exists $I'\in\mathscr{N}_{n+1}$ contained in the interior of $I$ such that $\w(I') = \w(I)i$. Conversely, for each $n\geq 0$ and each $I' \in \mathscr{N}_{n+1}$, there exists $I\in\mathscr{N}_{n}$ and $i\in\N$ such that $I'$ is contained in the interior of $I$ and $\w(I') = \w(I)i$.
\item[{(P3)}] For each $n\geq 0$ and each $I \in \mathscr{N}_n$, there exist exactly two intervals $J,J' \in \mathscr{I}_{n+1}$ contained in $I$. Conversely, for each $n\geq 0$ and each $J\in \mathscr{I}_{n+1}\setminus \mathscr{I}_{n}$, there exists unique interval $I_J \in \mathscr{N}_n$ such that $J \subset I_J$.
\item[{(P4)}] If $n\geq 1$, $I\in\mathscr{N}_{n-1}$ and $J,J' \in \mathscr{I}_n$ are contained in $I$, then there exists an orientation reversing linear map $\zeta_{J}:J' \to J$, such that $f_n|J' = (f_n|J)\circ\zeta_{J}$. Moreover, $f_{n}|J = g\circ \zeta$ where $g:[0,1] \to \phi_{\w(I)}(K)$ is the map from Lemma \ref{lem:conn} and $\zeta : J \to [0,1]$ is an increasing linear map.  
\item[{(P5)}] For each $n\geq0$ and each $I \in \mathscr{N}_n$, $f_n|I$ is constant and its image is in $\phi_{\w(I)}(K)$. Moreover, $f_n(I) \subset f_{n+1}(I) \subset \phi_{\w(I)}(K)$.
\item[{(P6)}] For each $n\in\N$, if $x$ is not in the interior of some $I\in \mathscr{N}_n$, then $f_{n+1}(x)=f_n(x)$.
\item[{(P7)}]  For each $n\in\N$, $f_n(0)=f_n(1)=p_0$.
\end{enumerate}

For the construction, apply Lemma \ref{lem:reparam} and obtain a map $\G:[0,1] \to K$ and a collection $\{\mathcal{I}_i : i\in\N\}$ of closed nondegenerate intervals in $[0,1]$ such that for each $i\in\N$, $\Gamma|\mathcal{I}_i$ is constant and its image is in $\phi_i(K)$. The proof of the claim is done in an inductive fashion.

For $n=0$, let $\mathscr{N}_0 = \{[0,1]\}$ and $\mathscr{I}_0 = \emptyset$, let $f_0 : [0,1] \to K$ be the constant map $p_0$, and let $\w([0,1]) = \varepsilon$. Properties (P1), (P5) and (P7) are trivial while the rest of them are vacuous.

Assume now that for some $n\geq 0$ we have defined a continuous $f_n : [0,1] \to K$, collections of intervals $\mathscr{N}_n,\mathscr{I}_n$, and a bijection $\w:\mathscr{N}_n \to \N^n$ satisfying assumptions (P1)--(P7). The new collections of intervals will be
\[ \mathscr{I}_{n+1} = \mathscr{I}_n \cup \bigcup_{I \in \mathscr{N}_n}\mathscr{I}_{n+1}(I), \qquad \mathscr{N}_{n+1} =  \bigcup_{I \in \mathscr{N}_n}\mathscr{N}_{n+1}(I).\]
We set 
\[ f_{n+1}| [0,1]\setminus \bigcup\mathscr{N}_n = f_n | [0,1]\setminus \bigcup\mathscr{N}_n.\]

Fix now $I\in \mathscr{N}_n$. By (P5), there exists $i\in\N$ such that $f_n(I) \in \phi_{\w(I)i}(K)$. Reparameterizing $\G$, we may assume that $\G(0)\in \phi_j(K)$ and $0$ is the left endpoint of $\mathcal{I}_j$. Write $I=[a,b]$, and let $a < a_1 < a_2 < b$. Let $\xi_I : [a_1,a_2] \to [0,1]$ be an increasing linear map and let $\zeta_I : [a,a_1] \to [0,1]$ and $\zeta_I' : [a_2,b] \to [0,1]$ be increasing linear maps. Set
\[ \mathscr{N}_{n+1}(I) = \{ \xi_I^{-1}(\mathcal{I}_{j}) : j\in\N\} \qquad\text{and}\qquad \mathscr{I}_{n+1}(I) = \{[a,a_1], [a_2,b]\}\]
and for each $j\in\N$ define $\w(\xi_I^{-1}(\mathcal{I}_{j})) = \w(I)j$.

Let $g: [0,1] \to \phi_{\w(I)}(K) $ be the map given from Lemma \ref{lem:conn} that connects $f_n(I)$ to $\phi_{\w(I)}(\Gamma(\mathcal{I}_j))$ and define
\begin{enumerate}
\item $f_{n+1}|[a,a_1] = g\circ \zeta_I$,
\item $f_{n+1}|[a_1,a_2] = \phi_{\w(I)}\circ \Gamma\circ\xi_{I}$,
\item $f_{n+1}|[a_2,b] = g\circ h \circ \zeta_I'$ where $h:[0,1] \to [0,1]$ with $h(x) = 1-x$.
\end{enumerate}

Properties (P1)--(P6) are clear from design and the properties of $\Gamma$. Note that for all $I \in \mathscr{N}_n$ the function $\w : \mathscr{N}_{n+1}(I) \to \{\w(I)i : i\in\N\}$ is bijective. Therefore, $\w:\mathscr{N}_{n+1} \to \N^{n+1}$ is a bijection. Finally, since $0,1$ are not contained in the interior of any $I\in\mathscr{N}_n$, by (P6), $f_{n+1}(0)=f_{n+1}(1)=p_0$.

It remains to prove continuity of $f_{n+1}$. 
Fix $x\in[0,1]$. We only show continuity of $f_{n+1}$ at $x$ from the right. To this end, fix a sequence $x_m \subset (x,1]$ that converges to $x$ and consider the following three cases.

\emph{Case 1.} Suppose that for all $m$ sufficiently large, $x_m \in [0,1]\setminus\overline{\bigcup \mathscr{N}_n}$. Then $f_{n+1}(x_m)$ converges to $f_{n+1}(x)$ by (P6) and continuity of $f_{n}$.

\emph{Case 2.} Suppose that for all $m$ sufficiently large, $x_m \in I$ for some $I\in\mathscr{N}_n$. Then, $f_{n+1}(x_m)$ converges to $f_{n+1}(x)$ by design.

\emph{Case 3.} Suppose that for for all $m$ sufficiently large, there exists $I_m \in \mathscr{N}_n$ such that $x_m \in I_m$ and that the collection $\{I_m\}_m$ is infinite. Fixing $\e>0$, there exists $i_0\in\mathbb{N}$ such that for every $i\geq i_0$, $L_i<\e/2$. Since the collection $\{\w(I_m)\}_m$ is infinite, there exists $N\in\N$ such that for every $m\geq N$, some character of the word $\w(I_m)$ is larger than $i_0$. It follows that $L_{\w(I_m)}< \e/2$ for every $m\geq N$. By continuity of $f_n$, we may further assume that for every $m\geq N$, $d(f_n(x),f_n(x_m))<\e/2$. By (P5), for every $m\geq N$,
\begin{align*}
d(f_{n+1}(x),f_{n+1}(x_m))\leq d(f_n(x),f_n(x_m))+d(f_n(x_m),f_{n+1}(x_m)) <\e.
\end{align*}

This completes the induction and the proof of (P1)--(P7). 

By (P5) and (P6),
\[ \|f_{n+1} - f_n\|_{\infty} \leq \sup_{I\in \mathscr{N}_n}\|f_n - f_{n+1}\|_{I,\infty} \leq \sup_{I\in \mathscr{N}_n} L_{\w(I)} \leq L_1^n\]
so the maps $f_n$ converge uniformly to a continuous map $f:[0,1] \to K$. By (P5) and the bijectivity of $\w$, we have $f([0,1])\cap \phi_w(K) \neq \emptyset$ for all $w\in \N^*$. Therefore, for all $x\in K$ and $n\in\N$,
\[ \dist(x,f([0,1])) \leq  \inf_{w\in\N^n} \diam{\phi_w(K)} \leq L_1^n.\]
Hence, $K\subset f([0,1])$ and it follows that $f([0,1]) = K$. This proves the first part of Theorem \ref{thm:3}.

For the second part of the theorem, assume that $\gamma$ is $\frac{1}{s}$-H\"older for some $s>\sdim(\mathcal{F})$ and with H\"older constant $H_0$. Define for all $w\in\N^*$
\[ M_w := 3\sum_{u\in\N^*}(L_{wu})^s.\]
Working as in the proof of Lemma \ref{lem:conn}, we have that for all $w \in \N^*$,
\[  M_w = 3\left(L_{w}\right)^s(1-\psi_{\mathcal{F}}(s))^{-1}.\]
For each $i\in\N$ set $a_i = M_i >0$ and note that $\|(a_i)\|_1=M_{\varepsilon} \lesssim_{s,\psi_{\mathcal{F}(s)}} 1$. We apply on each stage of the construction, the second part of Lemma \ref{lem:reparam} with $a_i = M_i$ and we may assume that $\G$ is $\frac{1}{s}$-H\"older with constant $H\lesssim_{s,\psi_\mathcal{F}(s),L_1} H_0$. 

The other change in the construction, is that we require that if $n\geq 0$, $I\in\mathscr{N}_n$, and $J\in \mathscr{J}_{n+1}(I)$, then
\[ |I| = M_{\varepsilon}^{-1}M_{\w(I)} \quad\text{and}\quad  |J| = M_{\varepsilon}^{-1} \left(L_{\w(I)}\right)^s.\]
To see why this is possible, fix $n\geq 0$ and $I\in \mathscr{N}_n$. By Lemma \ref{lem:reparam}
\begin{align*} 
|I| &= M_{\varepsilon}^{-1}M_{\w(I)}\\ 
&= 2M_{\varepsilon}^{-1} \left(L_{\w(I)}\right)^s + M_{\varepsilon}^{-1} \left(L_{\w(I)}\right)^s + \sum_{i\in\N}M_{\varepsilon}^{-1}M_{\textbf{w}(I)i}\\
&=\sum_{J\in \mathscr{J}_{n+1}(I)}|J| + \left|I\setminus \bigcup\mathscr{J}_{n+1}(I) \setminus \bigcup\mathscr{N}_{n+1}(I)\right| + \sum_{J\in \mathscr{N}_{n+1}(I)}|J|.
\end{align*}

We claim that the limit $f$ of the maps $f_n$ is $\frac{1}{s}$-H\"older continuous. To this end, fix distinct $p,q\in [0,1]$ and let $n\geq 0$ be the maximal integer such that there is some $I\in\mathscr{N}_n$ with $p,q\in I$. Denote by $J,J'$ the two elements in $\mathscr{J}_{n+1}(I)$, by $I'$ the closure of $I \setminus (J\cup J')$, and by $\mathcal{B}_I$ the closure of the set of points in $I$ which are not contained in any interval in $\mathscr{N}_{n+1}(I)\cup \mathscr{J}_{n+1}(I)$. The proof of the claim falls to a case study.

\emph{Case 1.} Suppose that $p,q\in J$ or $p,q\in J'$. By (P6) and Lemma \ref{lem:conn}, 
\begin{align*}
d(f(p),f(q)) &= d(g\circ\zeta_I(p),g\circ\zeta_I(q))\\
&\lesssim_{\psi_\mathcal{F}(s),L_1,s,H_0} L_{\w(I)}|J|^{-1/s}|p-q|^{1/s}\\
&\lesssim_{\psi_\mathcal{F}(s),s,L_1,H_0}|p-q|^{1/s}.
\end{align*}

\emph{Case 2.} Suppose that $p$ and $q$ are separated by the interior of one of $I'$, $J$, $J'$. Then $|p-q| \gtrsim_{s,\psi_{\mathcal{F}}(s)} (L_{\w(I)})^s$ while, by (P5), $f(I) \subset \phi_{\w(I)}(K)$ and 
\[d(f(p),f(q))\leq L_{\w(I)}.\]

\emph{Case 3.} Suppose that $p,q \in \mathcal{B}_I$. By (P6) and design of $f_{n+1}$, $f|\mathcal{B}_I = f_{n+1}|\mathcal{B}_I = \phi_{\w(I)}\circ\G\circ\xi_I$. Therefore, by Lemma \ref{lem:reparam} we have
\[d(f(p),f(q))\lesssim_{H_0,s,\psi_\mathcal{F}(s)} L_{\w(I)} |I'|^{-1/s} |p-q|^{1/s}\lesssim_{H_0,\psi_\mathcal{F}(s),s}|p-q|^{1/s}.\]

\emph{Case 4.} Suppose that $p\in I_1$ and $q\in I_2$ where $I_1,I_2\in\mathscr{N}_{n+1}(I)$. By maximality of $n$, we have $I_1\neq I_2$. Let $a\in I_1$ and $b\in I_2$ such that $|a-b|=\dist(I_1,I_2)$. Then the pair $p,a$ satisfy either Case 1 or Case 2, with $I$ replaced by $I_1$. Similarly for $q,b$. Moreover, $a,b \in \mathcal{B}_I$, and hence satisfy Case 3. Therefore, by triangle inequality, 
\[d(f(p),f(q)) \lesssim_{\psi_\mathcal{F}(s),s,H_0,L_1} |p-q|^{1/s}.\]

\emph{Case 5.} Suppose that $p\in I_1$ for some $I_1\in\mathscr{N}_{n+1}(I)$ and $q\in\mathcal{B}_I$. Let $a\in I_1$ be such that $|a-q|=\dist(I_1,q)$. Note that points $a,p$ satisfy one of Case 1 or Case 2 (with $I$ replaced by $I_1$), while points $a,q$ satisfy Case 3. Therefore, by the triangle inequality,
\[d(f(p),f(q))\lesssim_{\psi_\mathcal{F}(s),s,H_0,L_1} |p-q|^{1/s}.\]

\emph{Case 6.} Suppose that $p\in J\cup J'$ (say $J$) and $q\in I'$. Let $a\in J$ such that $|a-q|=\dist(J,q)$. Note that points $a,p$ satisfy Case 1, while points $a,q$ satisfy Case 3 or Case 5. Therefore, by the triangle inequality,
\[d(f(p),f(q))\lesssim_{\psi_\mathcal{F}(s),s,H_0,L_1} |p-q|^{1/s}. \qedhere\]
\end{proof}

\section{Examples of IIFS}\label{sec:ex}

In this section we provide three examples of IIFS. In \textsection\ref{sec:ex3} we show that the condition $\lim_{n\to \infty} \Lip(\phi_n) =0$ is necessary in Theorem \ref{thm:3}, in \textsection\ref{sec:ex1} we prove Theorem \ref{thm:1}(1), and in \textsection\ref{sec:ex2} we prove Theorem \ref{thm:1}(2).

\subsection{An IIFS without vanishing Lipschitz norms}\label{sec:ex3}
For this example we use complex coordinates. For each $n\in\N$ define a contraction $\phi_n: \overline{\mathbb{B}^2} \to \overline{\mathbb{B}^2}$ on the closed unit disk $\overline{\mathbb{B}^2}$ by
\[ \phi_n(z) = e^{\frac{2\pi i}{n}}\tfrac12(\text{Re}(z) + 1).\]
Note that $\Lip(\phi_n) = 1/2$ for all $n\in\N$, and let $\mathcal{F}=\{\phi_n:n\in\N\}$.

Set $G = \{te^{\frac{2\pi i}{n}} : n\in\N, t\in [0,1]\}$; see Figure \ref{fig3} below for the first generation of images of the system used in this section. Note that the figure shows only the images of finitely many maps in the family, the images in fact accumulate to the real interval $[0,1]$.

\begin{figure}[h]
 \centering
	\begin{minipage}{.49\textwidth}
	\centering
        \includegraphics[width=0.95\textwidth]{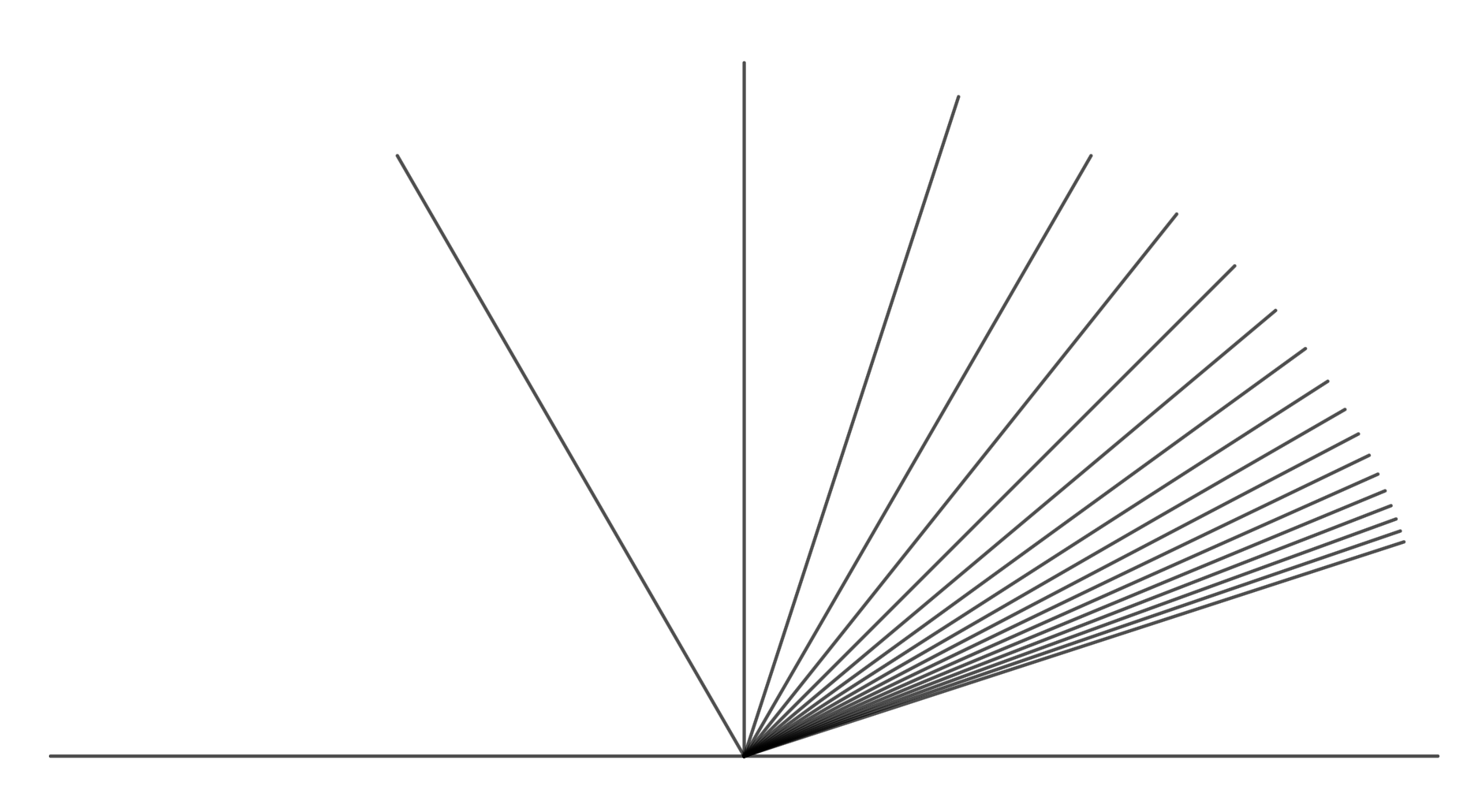}
	\end{minipage}\hfill
        \caption{Attractor of IIFS of \textsection \ref{sec:ex3}}\label{fig3}
\end{figure}

We claim that $G$ is the attractor $K$ of the IIFS $\mathcal{F}$. Note that for all $n\in\N$ we have $\phi_n(\overline{\mathbb{B}^2}) = \phi_n(G)$. Therefore, $\phi_w(\overline{\mathbb{B}^2}) = \phi_w(G)$ for all $w\in \N^*$. Moreover, it is easy to see that
\begin{equation*}\label{eq:book}
 \bigcup_{n\in\N}\phi_n(\overline{\mathbb{B}^2}) = G,
\end{equation*} 
which yields that $K\subset G$. For the opposite inclusion, fix $x \in G$. There exists $n_1 \in \N$ such that $x\in \phi_{n_1}(\overline{\mathbb{B}^2})$. Assume now that for some $m\in\N$ we have defined a word $w \in \N^m$ such that $x \in \phi_w(\overline{\mathbb{B}^2}) = \phi_n(G)$. Since $G = \bigcup_{n\in\N}\phi_n(\overline{\mathbb{B}^2})$, there exists $n_{m+1} \in \N$ such that $x \in \phi_{w n_{m+1}}(\overline{\mathbb{B}^2})$. It follows that there exists an infinite word $w=n_1n_2\cdots \in \N^{\N}$ such that $x \in \bigcap_{m\in\N}\phi_{n_1\cdots n_m}(\overline{\mathbb{B}^2})$ which yields that $G\subset K$. 

To complete the example, note that there exists a curve $\gamma:[0,1] \to K$ (namely the constant curve with image the origin) whose image intersects every $\phi_n(K)$, and the attractor $K$ is a continuum but it is not locally connected.

\subsection{An IIFS where the attractor is a continuum but not path connected}\label{sec:ex1}

Fix $s>1$ and fix $M\in\N$ such that $M > \max \left\{ 4^{\frac1{s-1}}, 7 \right\}$. For each $n\in\N$ let
\[ a_n = \frac{1+ M^{n}(2M+1)^{1-n}}{M+1} \quad\text{and}\quad b_n = M^{-n}(a_n-a_{n+1}-M^{-n}).\]
Note that $a_n \in (0,1]$ for each $n$, that $a_n$ is strictly decreasing, that $a_1 =1$, and that for all $n\in\N$
\[ a_{n} - a_{n+1} - M^{-n} = \left( \frac{M}{2M+1} \right)^{n}-M^{-n} > M^{-n}. \] 
The construction of the IIFS $\mathcal{F}$ is done in three subfamilies
\[ \mathcal{F} = \{\phi_{n,i} \}_{n,i} \cup \{\tau_{n,i}\}_{n,i} \cup\{\sigma_n \}_n.\]

First, for $n\in\N$ and $i\in \{1,\dots,M^n\}$, define $\phi_{n,i}: [0,1]^2 \to [0,1]^2$ by 
\[\phi_{n,i}((x,y)) = (a_{n} - M^{-n},M^{-n}(i-1)) + M^{-n}(x,y).\]

Second, for $n\in\N$ and $i\in \{1,\dots,M^n\}$, define $\tau_{n,i} : [0,1]^2 \to [0,1]^{2}$ by 
\begin{align*} 
\tau_{n,i}((x,y)) &= \left(a_{n}-M^{-n} -(i-1)b_n,0\right) + b_n(x,y)\quad\text{if $n$ is even}\\
\tau_{n,i}((x,y)) &= \left(a_{n}-M^{-n} -(i-1)b_n,1-b_n\right) + b_n(x,y)\quad\text{if $n$ is odd}.
\end{align*}

Third, for $i\in\{1,\dots ,M+1\}$, define $\sigma_i:[0,1]^2 \to [0,1]^2$ by
\[ \sigma_i((x,y)) = \left( 0, \tfrac{i-1}{M+1}\right) +   \tfrac{1}{M+1}(x,y).\]

See Figure \ref{fig5} below for the first generation of images of the system used in this section. Note that this figure shows only finitely many of the first generation images, and the images in fact accumulate to the boxes along the left edge of the square.

\begin{figure}[h]
        \centering
        \includegraphics[width=0.6\textwidth]{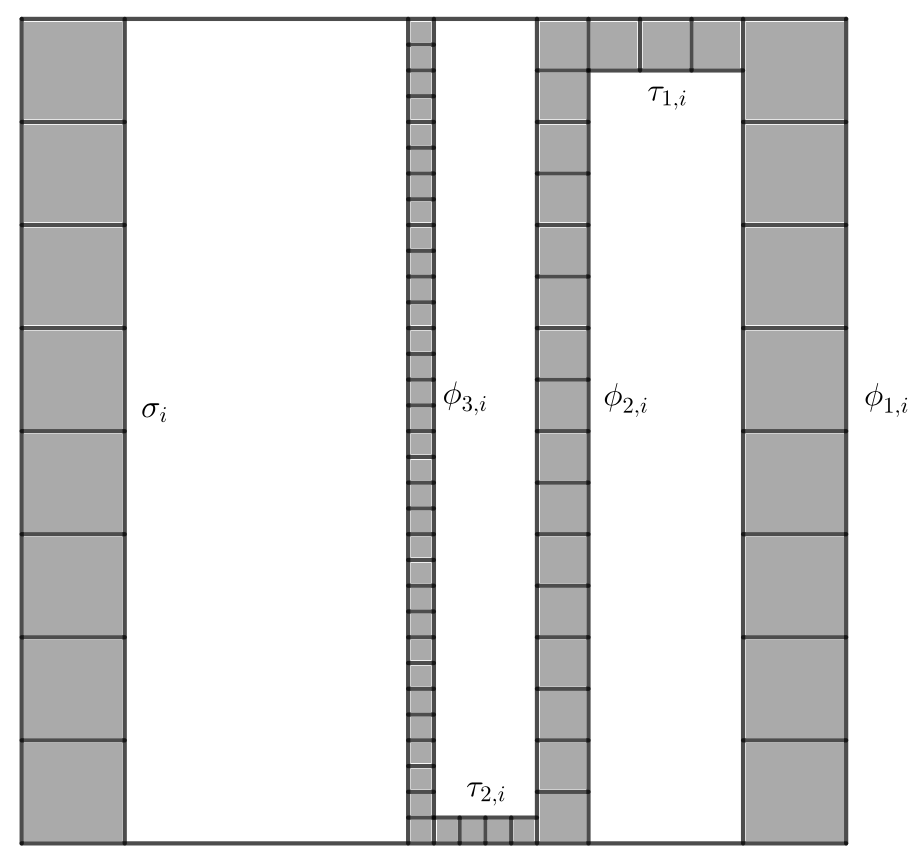}
	\caption{First iteration of IIFS of \textsection\ref{sec:ex1}.}
\label{fig5}
\end{figure}

Using the definition of $M$ we now compute
\begin{align*} 
\psi_{\mathcal{F}}(s) &= \sum_{n=1}^{\infty}\sum_{i=1}^{M^n} \Lip(\phi_{n,i})^s + \sum_{n=1}^{\infty}\sum_{i=1}^{M^n} \Lip(\tau_{n,i})^s + \sum_{n=1}^{M+1}\Lip(\sigma_{n})^s\\
&=\sum_{n=1}^{\infty}M^{(1-s)n} + \sum_{n=1}^{\infty}M^n b_n^{s} + (M+1)^{1-s}\\
&\leq \frac{2}{M^{s-1}-1} + \frac{1}{(M+1)^{s-1}}\\
&<1.
\end{align*}
Therefore, the similarity dimension of $\mathcal{F}$ is no more than $s$. We are now ready to prove the first part of Theorem \ref{thm:1}.

\begin{proof}[{Proof of Theorem \ref{thm:1}(1)}]
Define $K_0 = [0,1]^2$ and for each $m\in\N$ define $K_m = \bigcup_{f \in \mathcal{F}}f(K_{m-1})$.
By \eqref{eq:IIFSattractor}, $(K_m)_{m\in\N}$ is a nested family of sets with $K = \bigcap_{m\geq 0}K_m$. Thus in order to show that $K$ is a continuum, it suffices to show that each $K_m$ is a continuum. 

To show compactness, we proceed by induction. The base case $m=0$ is trivial. Suppose now that $K_m$ is compact. Let $(p_k)$ be a sequence in $K_{m+1}$ converging to some $p = (x,y)\in\R^2$. If $x < \frac{1}{2M-1}$, then for all $k$ large enough, $p_k\in\bigcup_{i}\sigma_{i}(K_{m})$ which is compact. If $x> \frac{1}{2M-1}$ then there exists a finite set $\mathcal{F}' \subset \mathcal{F}$ such that for all $k$ large enough, $p_k$ is contained in $\bigcup_{f\in \mathcal{F}'}f(K_m)$ which is compact. Finally, if $x = \frac{1}{2M-1}$, then $p\in \bigcup_{i}\sigma_{i}(K_{m})$. In either case, $p\in K_{m+1}$.

Connectedness is also shown inductively. The case $m=0$ is trivial. Suppose now that for some $m\geq 0$, the set $K_m$ is connected. Let 
\[ L=\bigcup_{i=1}^{2M-1}\sigma_i(K_m), \qquad R=K_{m+1}\setminus L = \bigcup_{n,i}\phi_{n,i}(K_{m}) \cup \bigcup_{n,i}\tau_{n,i}(K_{m}) .\] 
Note that $L$ is connected since for every $i\in \{1,\dots,2M-2\}$
\[ \sigma_{i+1}((1,0)) \in \sigma_i(K_m) \cap \sigma_{i+1}(K_m) \]
and each $\sigma_i(K_m)$ is connected by the inductive hypothesis. We also claim that $R$ is connected. This follows from the fact that sets $\tau_{n,i}(K_{m})$ and $\phi_{n,i}(K_{m})$ are connected and from the fact that for all $n\in\N$
\begin{align*}
\phi_{n,i+1}((1,0)) &\in \phi_{n,i}(K_m) \cap \phi_{n,i+1}(K_m) \\
\tau_{n,i+1}((1,0)) &\in \tau_{n,i}(K_m) \cap \tau_{n,i+1}(K_m)\\
\tau_{n,M^n}((0,0)) &\in \tau_{n,M^n}(K_m)\cap \phi_{n+1,1}(K_m) \\
\phi_{n,M^n}((0,0)) &\in \tau_{n,1}(K_m)\cap \phi_{n,M^n}(K_m).
\end{align*}

Therefore if we were to have some partition $K_{m+1}=A\cup B$ by disjoint nonempty open sets $A,B$, we must have that $A=L$ or $B=L$. However, $R$ is not a closed set, as it is disjoint from $\{\frac{1}{2M-1}\}\times[0,1]$ but its sequential closure contains $(\frac{1}{2M-1},0)$. Thus, $K_{m+1}$ is connected.

To finish the proof, we show that there is no path in $K_1$ connecting $p=(\frac{1}{2M-1},0)$ to $q=(1,0)$. Since both of these points are in $K$, and $K\subset K_1$ the latter implies that $K$ is not path connected. To this end, assume for a contradiction that $f = (f_1,f_2) :[0,1]\to K_1$ is a continuous map with $f(0)=p$, $f(1)=q$. By design of $K_1$, for any $n\in\mathbb{N}$
\begin{enumerate}
\item for any $t\in [0,1]$ with 
\[ \tfrac{1}{2M-1}+2^{-2n} < f_1(t) < \tfrac{1}{2M-1}+2^{-2n+1}-M^{-2n}\]
we have that $f_2(t) \leq (2M)^{-2n}$ and
\item for any $t\in [0,1]$ with 
\[ \tfrac{1}{2M-1}+2^{-2n-1} < f_1(t) < \tfrac{1}{2M-1}+2^{-2n}-M^{-2n-1}\]
we have that $f_2(t) \geq 1-(2M)^{-2n-1}$.
\end{enumerate}
It follows now that $f_2$ (and consequently $f$) is not continuous at $t=0$.
\end{proof}

\subsection{An IIFS where the attractor is the image of a curve but not the image of a H\"older curve}\label{sec:ex2}

Fix $s>1$ and fix an even integer $ M \geq \max\{10, 7^{\frac1{s-1}}\}$. For each $n\in\N$ let 
\[ a_n=\frac{\log(2)}{M^n\log(n+1)} \quad\text{and}\quad b_n = \frac{4 + 2M(2^{-1}-M^{-1})^n}{M+2}.\]
A simple calculation shows that for all $n\in\N$, $b_{n+1} < b_n - 2a_n < b_n$. Additionally, for each $n\in\N$ let $N_n$ be an integer such that
\[2 a_{n+1}^{-1}(b_n - a_n - b_{n+1}) \leq N_n \leq 4 a_{n+1}^{-1}(b_n - a_n - b_{n+1}).\]

The construction of the IIFS $\mathcal{F}$ is done in three subfamilies
\[ \mathcal{F} = \{\phi_{n,i}\}_{n,i} \cup\{\tau_{n,i}\}_{n,i} \cup \{\sigma\}.\]

First, for each $n \in\N$ and $i \in \{1,\dots,M^n\}$ define $\phi_{n,i}:[0,1]^2 \to[0,1]^2$ by 
\[ \phi_{n,i}((x,y)) = \left(b_n - a_n ,   (i-1)a_n\right) + a_n (x,y).\]
Second, for $n\in\mathbb{N}$ and $i \in \{1,\dots, N_n\}$ define $\tau_{n,i}:[0,1]^2 \to[0,1]^2$ by
\begin{align*} 
 \tau_{n,i}((x,y))= &\left( b_{n+1} + (i-1)\tfrac{b_n -a_n-b_{n+1}}{N_n} ,0\right)  + \tfrac{b_n -a_n-b_{n+1}}{N_n} (x,y).
\end{align*}
Third, we define $\sigma:[0,1]^2 \to [0,1]^2$ such that 
\[\sigma((x,y)) =\tfrac{4}{M+2} (x,y).\]

See Figure \ref{fig6} below for the first generation of images of the system used in this section. Note that this figure shows only finitely many of the first generation images, and the images in fact accumulate to the box in the bottom left corner of the square.

\begin{figure}[h]
        \centering
        \includegraphics[width=0.6\textwidth]{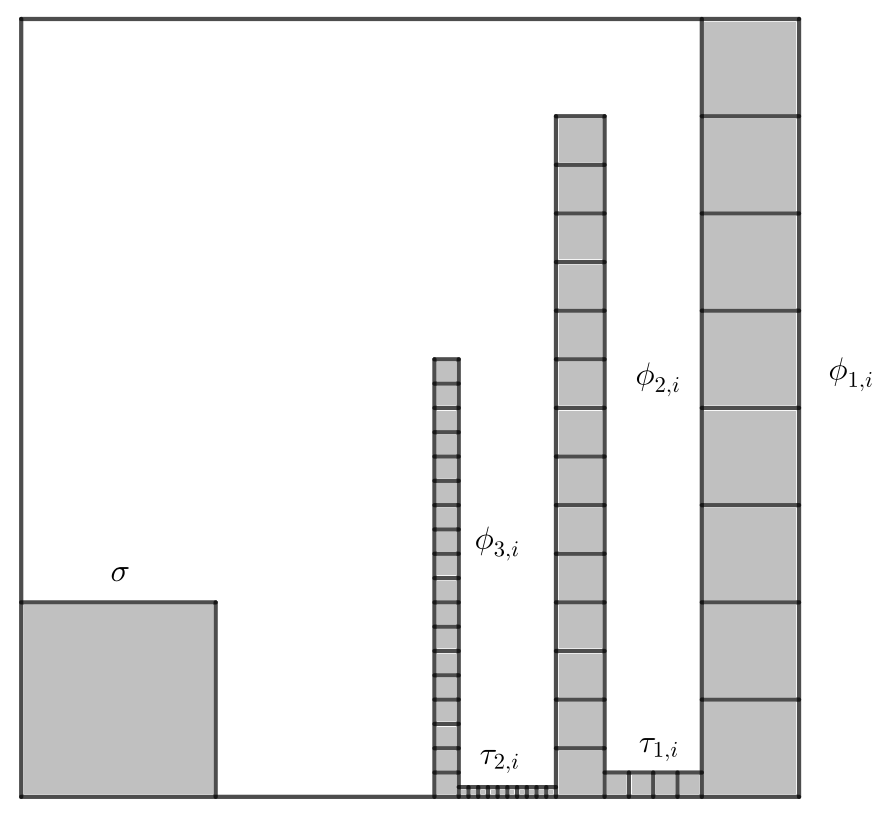} 
    \caption{First iteration of IIFS of \textsection\ref{sec:ex2}.}
\label{fig6}
\end{figure}

Using the definition of $M$ we compute 
\begin{align*}
\psi_{\mathcal{F}}(s)&=\sum_{n=1}^\infty\sum_{i=1}^{M^n} (\Lip(\phi_{n,i}))^s+\sum_{n=1}^\infty\sum_{i=1}^{N_n}(\Lip(\tau_{n,i}))^s + (\Lip(\sigma))^s.\\
&= \sum_{n=1}^\infty M^n a_n^s+\sum_{n=1}^\infty N_n\left(\frac{b_n -a_n-b_{n+1}}{N_n}\right)^s + \left(\frac{4}{M+2}\right)^s\\
&\leq \frac{M^{1-s}}{1-M^{1-s}} + \frac{M^{1-s}}{1-M^{1-s}} + \frac{4^s}{M^s}\\
&<1.
\end{align*}
Therefore, the similarity dimension of $\mathcal{F}$ is at most $s$.

\begin{lemma}\label{prop:curve}
The attractor $K$ of the IIFS $\mathcal{F}$ is the image of a curve.
\end{lemma}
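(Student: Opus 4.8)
The plan is to exhibit a curve whose image is the attractor $K$ by applying Theorem \ref{thm:3}(1). To do this I must verify the three hypotheses of that theorem for this specific IIFS: that $K$ is compact, that $\lim_{i\to\infty}\Lip(\phi_i)=0$, and that there exists a curve $\gamma:[0,1]\to K$ whose image meets $\phi(K)$ for every map $\phi$ in the system. The Lipschitz condition is essentially a bookkeeping check: the contraction ratios are $a_n=\frac{\log 2}{M^n\log(n+1)}$ for the $\phi_{n,i}$, the quantities $\frac{b_n-a_n-b_{n+1}}{N_n}$ for the $\tau_{n,i}$, and the single constant $\frac{4}{M+2}$ for $\sigma$. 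The first two families have ratios tending to $0$ as $n\to\infty$ (each stage is geometrically smaller because of the $M^{-n}$ factors), while there is only one map $\sigma$, so only finitely many maps have ratio exceeding any fixed threshold; hence $\lim_{i\to\infty}\Lip(\phi_i)=0$ once the countable family is enumerated.

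Next I would establish compactness of $K$. Since each generating map sends $[0,1]^2$ into itself and the images $\phi(K)$ accumulate only on the box $\sigma([0,1]^2)=[0,\tfrac{4}{M+2}]^2$ in the bottom-left corner, the attractor is a bounded subset of $[0,1]^2$; closedness follows by an argument parallel to the compactness induction in the proof of Theorem \ref{thm:1}(1), treating the accumulation set as the limiting locus. I expect this to be routine but requiring care about the one coordinate region where infinitely many images pile up.

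The heart of the matter is producing the curve $\gamma$ meeting every first-generation piece. I would build $\gamma$ by first noting that the images $\phi_{n,i}(K)$, $\tau_{n,i}(K)$, and $\sigma(K)$ are arranged so that successive pieces share boundary points, exactly as in the connectedness computation of Theorem \ref{thm:1}(1): the $\phi_{n,i}$ stack vertically into a column at horizontal position $b_n-a_n$, the $\tau_{n,i}$ tile a horizontal segment at height $0$ bridging from $b_{n+1}$ across to the base of the $n$-th column, and $\sigma(K)$ sits in the corner where all these columns accumulate. The plan is to traverse, for each $n$, up the $n$-th column of $\phi_{n,i}$ boxes, back down, then along the $\tau_{n,i}$ bridge toward the corner, and to do this for $n=1,2,3,\dots$ with the traversals of the $n$-th stage confined to a region of diameter shrinking like a power of $M^{-n}$, so that the concatenation converges to a genuine continuous closed curve based at the corner point in $\sigma(K)$. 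The main obstacle will be verifying continuity at the accumulation point: I must arrange the parameterization so that the total parameter-length devoted to all pieces beyond stage $n$ goes to zero and the spatial diameter of the tail likewise goes to zero, which is why the choice of $N_n$ (controlling the bridge subdivision) and the geometric decay of $b_n-b_{n+1}$ and $a_n$ matter. Once such a $\gamma$ is constructed, its image meets each $\phi_{n,i}(K)$, $\tau_{n,i}(K)$, and $\sigma(K)$ by design, and Theorem \ref{thm:3}(1) immediately yields that $K$ is the image of a curve.
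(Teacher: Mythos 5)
Your overall framework---check the hypotheses of Theorem \ref{thm:3}(1) and apply it---is the same as the paper's, and your verifications that $\lim_{i\to\infty}\Lip(\phi_i)=0$ and that $K$ is compact are fine. The genuine gap is in the construction of $\gamma$: you never show that your proposed traversal lies \emph{inside} $K$. A curve admissible for Theorem \ref{thm:3}(1) must map $[0,1]$ into $K$, and the only justification you give is that consecutive first-generation pieces share boundary points, ``exactly as in the connectedness computation of Theorem \ref{thm:1}(1).'' But adjacency of pieces only yields connectedness, never path-connectedness: the example of \textsection\ref{sec:ex1} has exactly the same chain-of-touching-boxes structure, and its attractor is a continuum that is \emph{not} path connected. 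So ``traverse up the column, back down, then along the bridge'' is not yet a construction of a map into $K$; to travel through a single piece $\phi_{n,i}(K)$ from its entry point to its exit point you would need a path inside $\phi_{n,i}(K)$, which is a scaled copy of the very set whose path-connectedness is in question---a circularity your argument does not break.

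The missing idea, which is the heart of the paper's proof, is that certain line segments are genuinely contained in $K$: with $L_b$ the bottom edge and $L_r$ the right edge of $[0,1]^2$, one proves by induction that $L_b\cup L_r\subset K_m$ for every $m$, using the self-tiling identities
\[ L_r=\bigcup_{i=1}^M \phi_{1,i}(L_r),\qquad L_b=\bigcup_{n}\phi_{n,1}(L_b)\cup\bigcup_{n,i}\tau_{n,i}(L_b)\cup\sigma(L_b), \]
whence $L_b\cup L_r\subset K$ and, applying the maps of $\mathcal{F}$, the comb $E=L_b\cup\bigcup_{n,i}\phi_{n,i}(L_r)$ is a subset of $K$. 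This $E$ is closed, connected and locally connected (the teeth have heights $\log 2/\log(n+1)\to 0$ and accumulate at a single point of $L_b$), so, as in Lemma \ref{lem:reparam}, it is the image of a curve, and that curve meets every first-generation piece by design; Theorem \ref{thm:3}(1) then finishes the proof. Your traversal picture can be repaired by exactly this claim---travel along $L_b$ and up and down the teeth $\phi_{n,i}(L_r)$---but without it the construction has no content. A minor, non-fatal error: the $n$-th stage of your traversal is confined to a region of diameter comparable to $\log 2/\log(n+1)$ (the column height), not to ``a power of $M^{-n}$''; this slow decay is precisely the feature exploited in Theorem \ref{thm:1}(2) to rule out H\"older parameterizations, though it still tends to $0$, which is all that continuity at the accumulation point requires.
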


\begin{proof}
Define $L_b,L_r$ to be the bottom and left, respectively, edges of the unit square $[0,1]^2$. Define also $K_0=[0,1]^2$, and for each $m\in\N$ define $K_{m} = \bigcup_{f\in\mathcal{F}}f(K_{m-1})$.

We claim that for every $f\in \mathcal{F}$, $f(L_b\cup L_r) \subset f(K)$. Assuming the claim, we note that the set 
\[ E = L_b \cup  \bigcup_{n,i}\phi_{n,i}(L_r)\]
is contained in $K$ and, working as in the proof of Lemma \ref{lem:reparam}, it follows that $E$ is the image of a curve. Now, by  Theorem \ref{thm:3}(1), it follows that $K$ is the image of a curve.

To prove the claim, we show that $L_b\cup L_r \subset K_m$ for every $m\in\mathbb{N}$. By a simple inductive argument, this immediately implies that $f(L_b\cup L_r)\subset f(K)$ for all $f\in \mathcal{F}$. The proof is by induction on $m$. Clearly $L_b\cup L_r\subset K_0$. Assume now that $L_b\cup L_r \subset K_m$ for some integer $m \geq 0$. First,
\[L_r =\bigcup_{i=1}^M\left( \{1\}\times\left[\frac{i-1}{M},\frac{i}{M}\right] \right) = \bigcup_{i=1}^{M}\phi_{1,i}(L_r) \subset K_{m+1}.\]
Second, 
\begin{align*}
L_b &= \bigcup_{n=1}^{\infty}\left([b_n-a_n,b_n] \times \{0\}\right) \cup \bigcup_{n=1}^{\infty}\left([b_{n+1},b_n-a_n] \times \{0\}\right) \cup \left( [0,\tfrac4{M+2}]\times\{0\} \right)\\
&= \bigcup_{n=1}^{\infty}\phi_{n,1}(L_b) \cup \bigcup_{n,i} \tau_{n,i}(L_b) \cup \sigma(L_b)\\
&\subset \bigcup_{f\in\mathcal{F}}f(K_m). \qedhere
\end{align*}
\end{proof}

\begin{proof}[{Proof of Theorem \ref{thm:1}(2)}]
We show that the attractor $K$ of the IIFS $\mathcal{F}$ is not the image of a H\"older curve. Assume for a contradiction that there exists $\alpha\geq 1$, $H>0$ and a  surjection $f:[0,1]\to K$ such that for all $x,y\in[0,1]$
\[|f(x)-f(y)|^\alpha\leq H |x-y|.\] 
For each $n\in\N$ let
\[ A_n = \bigcup_{i=1}^{M^n} \phi_{n,i}(K).\]
Note that the height of each ``tower'' $A_n$ is equal to $\log(2)/\log(n+1)$.

Recall that $M$ was chosen even and for each $n\in\N$ define $p_n = \phi_{n,M^n}((1,1))$ and $q_n = \phi_{n,\frac12M^n}((1,1))$. Following the proof of Lemma \ref{prop:curve}, for each $n\in\N$, the vertical segment $[q_n,p_n] \subset A_n$. Moreover, for any $n\in\N$, $p_n$ is the point of $A_n$ with the highest $y$-coordinate, and the $y$-coordinate of $q_n$ is $\frac12\log(2)/\log(n+1)$. Setting 
\[ B_n = \bigcup_{i=\frac12 M^n + 1}^{M^n} \phi_{n,i}(K), \]
we have $B_n \cap \overline{K\setminus B_n} = \{q_n\}$. Given that $B_n$ is connected, there exists for each $n\in\N$ an interval $I_n \subset [0,1]$ such that 
\[ \{p_n,q_n\} \subset f(I_n) \subset B_n.\]
It follows that the intervals $I_1,I_2,\dots$ are mutually disjoint and
\begin{align*}
1 \geq \sum_{n=1}^{\infty} \diam{I_n} \geq \sum_{n=1}^{\infty} H^{-1}|p_n-q_n|^{\a} = \frac{(\log{2})^{\a}}{2^{\a}H} \sum_{n=1}^{\infty} \frac{1}{(\log(n+1))^{\a}}.
\end{align*}
However, the latter series diverges and we reach a contradiction.
\end{proof}

\bibliographystyle{alpha}
\bibliography{IIFS_bibliography}

\end{document}